\title{Tree Matchings}
\author{Alexander Roberts\thanks{Mathematical Institute, University of Oxford, Andrew Wiles Building, Radcliffe Observatory Quarter, Woodstock Road, Oxford, United Kingdom. \newline  E-mail: \texttt{robertsa@maths.ox.ac.uk}.}}
\newtheoremstyle{case}{}{}{\normalfont}{}{\itshape}{:}{ }{}
\newtheorem{thm}{Theorem}[section]
\newtheorem{lem}[thm]{Lemma}
\newtheorem{prop}[thm]{Proposition}
\theoremstyle{definition}
\newtheorem{defn}[thm]{Definition}
\numberwithin{equation}{section}
\newtheoremstyle{case}{}{}{\normalfont}{}{\itshape}{\normalfont:}{ }{}
\theoremstyle{case}
\def\comment#1{}
\newcommand{\beq}{\begin{eqnarray*}}
\newcommand{\eeq}{\end{eqnarray*}}
\def\build#1_#2^#3{\mathrel{\mathop{\kern 0pt#1}\limits_{#2}^{#3}}}
\newcommand{\beqs}{\begin{eqnarray}}
\newcommand{\eeqs}{\end{eqnarray}}
\newcommand{\cC}{\mathcal{C}}
\numberwithin{equation}{section}
\tikzset{
    position/.style args={#1:#2 from #3}{
        at=(#3.#1), anchor=#1+180, shift=(#1:#2)
    }
}
\begin{document}
\maketitle
\begin{abstract}
An $(s,t)$-matching in a bipartite graph $G=(U,V,E)$ is a subset of the edges $F$ such that each component of $G[F]$ is a tree with at most $t$ edges and each vertex in $U$ has $s$ neighbours in $G[H]$. We give sharp conditions for a bipartite graph to contain an $(s,t)$-matching. As a special case, we prove a conjecture of Bonacina, Galesi, Huynh and Wollan \cite{CNF}.
\end{abstract}
\section{Introduction}
\tikzstyle{vertex} = [draw,shape=circle,node distance=80pt]
\tikzstyle{hex} = [draw,regular polygon, regular polygon sides = 6]
\tikzstyle{switch} = [fill,shape=circle,node distance=80pt]
\tikzstyle{vsquare} = [draw,shape=rectangle,node distance = 80pt]
\tikzstyle{edge} = [draw,opacity=0.1,line cap=round, line join=round, line width=30pt]

Let $G = (U,V,E)$ be a bipartite graph. A \it matching \rm from $U$ to $V$ is a subset $F$ of pairwise disjoint edges from $E$ such that each vertex from $U$ is incident to an edge in $F$. For $\alpha > 0$ we will say that $G$ satisfies the \it $\alpha$-neighbourhood condition \rm if $|\Gamma(S)| \ge \alpha |S|$ for each $S \subset U$. A fundamental result in matching theory is Hall's Theorem.

\begin{thm}[Hall's Theorem {\cite{Hall1}}]
Let $G=(U,V,E)$ be a bipartite graph, then $G$ has a matching from $U$ to $V$ iff $G$ satisfies the $1$-neighbourhood condition.
\end{thm}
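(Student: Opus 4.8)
The forward implication is immediate and I would dispose of it in a line: if $F$ is a matching from $U$ to $V$, then for any $S \subseteq U$ the $F$-partners of the vertices of $S$ are $|S|$ distinct elements of $\Gamma(S)$, so $|\Gamma(S)| \ge |S|$. For the converse the plan is to induct on $|U|$, the case $|U| = 1$ being trivial since a single vertex of $U$ has at least one neighbour by the $1$-neighbourhood condition. For the inductive step I would split into two cases according to whether Hall's condition holds with slack.

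\emph{Case 1: every nonempty $S \subsetneq U$ satisfies $|\Gamma(S)| \ge |S| + 1$.} Here I would pick an arbitrary edge $uv \in E$, delete $u$ from $U$ and $v$ from $V$, and note that for every nonempty $S \subseteq U \setminus \{u\}$ the neighbourhood can shrink by at most the single vertex $v$, so $|\Gamma(S)| - 1 \ge |S|$ still holds. The reduced graph thus satisfies the $1$-neighbourhood condition, and by induction it has a matching of $U \setminus \{u\}$; appending $uv$ finishes this case.

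\emph{Case 2: some nonempty $S \subsetneq U$ is tight, i.e.\ $|\Gamma(S)| = |S|$.} The idea is to peel off this tight set. Let $G_1 = G[S \cup \Gamma(S)]$ and $G_2 = G[(U \setminus S) \cup (V \setminus \Gamma(S))]$, which are vertex-disjoint on both sides and each have strictly fewer $U$-vertices than $G$. I would check that both satisfy the $1$-neighbourhood condition and then take the union of the two matchings produced by induction. For $G_1$ the condition is inherited directly from $G$. For $G_2$, the key observation is that a vertex of $U \setminus S$ has no neighbour inside $\Gamma(S)$ once we restrict to $G_2$, so for $T \subseteq U \setminus S$ the set $\Gamma_G(S \cup T)$ is the disjoint union of $\Gamma(S)$ and $\Gamma_{G_2}(T)$; combining this with $|\Gamma_G(S \cup T)| \ge |S| + |T|$ and $|\Gamma(S)| = |S|$ yields $|\Gamma_{G_2}(T)| \ge |T|$.

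The only place that needs care is this last case: one must be sure the two sub-instances are genuinely disjoint so that their matchings combine into a single matching of all of $U$, and that the deficiency-free hypothesis really is inherited by $G_2$ after contracting the tight set. An alternative I would keep in mind is the augmenting-path argument --- start from any matching, repeatedly augment along alternating paths, and observe that if some $u \in U$ stays unmatched with no augmenting path available then the $U$-vertices reachable from $u$ by alternating paths, together with their common neighbourhood, violate the $1$-neighbourhood condition --- but the inductive proof above is shorter and entirely self-contained, so that is the one I would write out.
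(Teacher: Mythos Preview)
Your proof is correct and is the standard inductive argument for Hall's Theorem. However, the paper does not actually prove this statement: it is quoted as a classical result with a citation to Hall's original paper, and is used only as background motivation for the notion of an $(s,t)$-matching. There is therefore no ``paper's own proof'' to compare against; your argument would serve perfectly well as a self-contained justification, but in the paper the theorem is simply taken for granted.
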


It follows easily from Hall's Theorem that if $G$ satisfies the $h$-neighbourhood condition then $G$ has an $(h,h)$-matching, or in other words a collection of vertex disjoint stars $K_{1,h}$ centred on the vertices of $U$. But what happens if $G$ does not quite satisfy the $h$-neighbourhood condition? $G$ no longer has an $h$-matching, but perhaps we can choose $h$ edges incident with each vertex of $U$ so that the resulting graph has only small components. 

\begin{defn}
Let $t \ge s$ be positive integers and $G = (U,V,E)$ be a bipartite graph. An $(s,t)$-matching is a subset $F$ of $E$ such that in $H = (U,V,F)$, each component is a tree with at most $t$ edges, and $d_H(u) = s$ for each $u \in U$.
\end{defn}

A special case of this question was raised in a paper of Bonacina, Galesi, Huynh and Wollan \cite{CNF}. That paper considered a covering game on a bipartite graph. It turned out that which player wins is strongly linked to the existence of a $(2,4)$-matching in $G$. Bonacina, Galesi, Huynh and Wollan showed that for $\epsilon < \frac{1}{23}$ the $(2-\epsilon)$-neighbourhood condition is sufficient for the existence of a $(2,4)$-matching in a bipartite graph $G$ with maximal left degree at most $3$. They conjectured that the result should hold for $\epsilon = \frac{1}{3}$. In this paper, we prove their conjecture as a special case of a much more general result.

We will give sufficient neighbourhood conditions for the existence of $(h,hk)$-matchings for general $h,k$.

\begin{thm}\label{summary}
Let $k \ge 1$ and $h \ge 2$ be positive integers and let $G = (U,V,E)$ be a bipartite graph. Suppose that for all $S \subset U$,
	\beqs
		|\Gamma(S)| \ge \biggr(h-1+\frac{1}{\lceil k/h \rceil}\biggl)|S|. \nonumber
	\eeqs
Then $G$ has an $(h,hk)$-matching.
\end{thm}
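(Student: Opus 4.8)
The plan is to induct on $|U|$, first passing to a ``critical'' instance and then building the matching there out of vertex-disjoint small trees.

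\emph{Reductions.} Write $q=\lceil k/h\rceil$ and $\alpha=h-1+1/q$. If $k\le h$ then $q=1$, $\alpha=h$, and the hypothesis is exactly the $h$-neighbourhood condition; applying Hall's Theorem after replacing each $u\in U$ by $h$ twins yields a family of vertex-disjoint stars $K_{1,h}$, which is an $(h,hk)$-matching since $h\le hk$. So assume $q\ge 2$; note $q\le k$, so every tree with at most $q$ vertices in $U$ has at most $hq\le hk$ edges. For the general case, if the minimum of $|\Gamma(A)|/|A|$ over nonempty $A\subseteq U$ is attained by a proper subset $A_0$, then a short uncrossing computation (using $|\Gamma(A_0)|=\beta|A_0|$, where $\beta$ is this minimum) shows that both $G[A_0\cup\Gamma(A_0)]$ and $G-A_0-\Gamma(A_0)$ still satisfy the $\alpha$-neighbourhood condition; each has strictly fewer vertices in $U$, so by induction each contains an $(h,hk)$-matching, and their union is one for $G$. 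Hence we may assume the minimum is attained only at $U$: setting $\beta=|\Gamma(U)|/|U|\ (\ge\alpha)$ we have $|\Gamma(U)|=\beta|U|$ and $|\Gamma(S)|>\beta|S|$ for every proper nonempty $S\subsetneq U$. In particular $G[U\cup\Gamma(U)]$ is connected, since otherwise $U$ would split into parts with disjoint neighbourhoods, contradicting the strict inequality.

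\emph{The target in the critical case.} I aim to partition $U$ into blocks $B_1,\dots,B_m$ with $|B_i|\le q$ and to choose pairwise disjoint sets $W_i\subseteq\Gamma(B_i)$ with $|W_i|=(h-1)|B_i|+1$, such that each $G[B_i\cup W_i]$ has a spanning tree in which every vertex of $B_i$ has degree exactly $h$. The union of these trees is then the desired $(h,hk)$-matching: every vertex of $U$ has degree $h$, and each component is a tree with at most $q\le k$ vertices in $U$, hence at most $hk$ edges. The counting is consistent: by integrality $|\Gamma(U)|=\beta|U|\ge(h-1)|U|+\lceil|U|/q\rceil$, while a partition into blocks of size at most $q$ needs at least $\lceil|U|/q\rceil$ blocks, so $\sum_i|W_i|=(h-1)|U|+m$ can be made at most $|\Gamma(U)|$.

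\emph{Realizing a block.} The realizability of a single block reduces to a self-contained fact, which I would state allowing non-uniform target degrees $d\colon X\to\mathbb{Z}_{\ge1}$ and prove by induction on $|X|+|E|$: \emph{a bipartite graph $(X,Y,E)$ with $|Y|=\sum_{x\in X}(d(x)-1)+1$ has a spanning tree $T$ with $d_T(x)=d(x)$ for all $x$ if and only if $|\Gamma(X')|\ge\sum_{x\in X'}(d(x)-1)+1$ for every nonempty $X'\subseteq X$.} Necessity is immediate from counting the edges of $T$ incident to $X'$; for sufficiency, after deleting edges one may assume the condition is critical, whereupon either some $x$ or some $y$ is a leaf of the forced structure (the degree sums rule out all $d(x)\ge2$ together with all $\deg_G(y)\ge2$), so one contracts a leaf --- decreasing $d(x_0)$ when the leaf lies in $Y$ --- and recurses; the equality $|Y|=\sum(d(x)-1)+1$ forces connectedness, so one genuinely obtains a spanning tree. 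For our blocks this specialises to the requirement $|\Gamma_G(B')\cap W_i|\ge(h-1)|B'|+1$ for all $\emptyset\ne B'\subseteq B_i$, which is what the $\alpha$-condition must be made to deliver.

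\emph{The main obstacle.} The crux --- the step where the precise value $h-1+1/\lceil k/h\rceil$ is essential --- is producing the block partition $\{(B_i,W_i)\}$ in the critical case. Two constraints pull against each other: the requirement $d_H(u)=h$ leaves no slack in how many vertices of $V$ a component may use (locally, a component must be a spanning tree), while the cap $|B_i|\le k$ forces many blocks, and all of this must fit inside the margin $1/q$ of the neighbourhood condition. A naive ``peel one block at a time'' fails, because any block $B$ that is a proper subset of a ratio-minimiser has $|\Gamma(B)|>\beta|B|$, so deleting $B$ together with a private $V$-set of size $(h-1)|B|+1\le\beta|B|$ need not preserve the $\alpha$-condition for what remains. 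I expect the right treatment to be either a genuinely global argument --- a deficiency/minimax identity showing that the maximum number of vertices of $U$ coverable by an $(h,\le hk)$-forest equals $|U|$ precisely under the $\alpha$-condition --- or a peeling argument carrying a finer invariant that tracks how much of the slack has been spent. In either case the bookkeeping closes up exactly because $(h-1)q+1=\alpha q$, which is why the constant takes the stated form.
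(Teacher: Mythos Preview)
Your proposal has a genuine gap, and you identify it yourself: the section you label ``the main obstacle'' is not solved. You set up a clean target---partition $U$ into blocks $B_i$ of size at most $q=\lceil k/h\rceil$, choose pairwise disjoint $W_i\subseteq\Gamma(B_i)$ of size $(h-1)|B_i|+1$, and invoke your spanning-tree-with-prescribed-degrees lemma on each---but you never construct such a partition. You correctly observe that peeling one block at a time does not preserve the $\alpha$-condition on the remainder, and then offer only the expectation that a minimax identity or a refined peeling invariant should close the argument. That is the entire content of the theorem; the reductions and the block lemma are routine by comparison. Note too that feasibility is more delicate than your counting paragraph suggests: beyond $\sum_i|W_i|\le|\Gamma(U)|$, you need the $W_i$ to be simultaneously allocatable inside the $\Gamma(B_i)$ \emph{and} to satisfy the lemma's Hall-type condition $|\Gamma(B')\cap W_i|\ge(h-1)|B'|+1$ for every $\emptyset\ne B'\subseteq B_i$, and nothing in the global $\alpha$-condition localises to that.

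The paper takes a completely different route. It proves the stronger degree-bounded statement (Theorem~\ref{main}) and obtains the present theorem by letting the left-degree bound $d\to\infty$. For Theorem~\ref{main} it considers an edge-minimal counterexample, first reducing (as you do) to the case $h(A,\alpha)>h(U,\alpha)$ for all proper nonempty $A$, and observing that no edge can be $\alpha$-redundant. The key structural step is Lemma~\ref{tree}: a bipartite graph with no isolated vertices, no $\alpha$-redundant edge, $h(U,\alpha)\in[0,1)$, and strict inequality on proper subsets, must be a \emph{tree}. Having forced the counterexample to be a tree, the paper then runs an algorithm building a maximal $F\subseteq\{u\in U:d(u)=h\}$ whose components attach to $U\setminus F$ through a single vertex (the ``critical link property''), and shows that the induced subgraph on $(U\setminus F)\cup\{v:|\Gamma(v)\setminus F|\ge2\}$ has minimum degree $2$---hence a cycle, contradicting that $G$ is a tree. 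The constant $h-1+1/\lceil k/h\rceil$ emerges from edge counts in the tree at exactly the point where one rules out $|\Gamma(u)\cap W|\le 1$ for the high-degree vertices. If you wish to salvage your block-partition approach you would need an honest extremal characterisation of when such a partition exists; otherwise the minimal-counterexample-is-a-tree strategy is what actually carries the weight.
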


We will actually prove a stronger result which conditions on the maximum left degree of the bipartite graph.

\begin{thm}\label{main}
Let $k \ge 1$ and $d > h \ge 2$ be positive integers and let $G = (U,V,E)$ be a bipartite graph. Suppose that $d(u) \le d$ for all $u \in U$ and, for all $S \subset U$,
	\beqs
		|\Gamma(S)| \ge \biggr(h-1+\frac{d-h+1}{k+1+(d-h-1)\lceil k/h \rceil}\biggl)|S|. \nonumber
	\eeqs
Then $G$ has an $(h,hk)$-matching.
\end{thm}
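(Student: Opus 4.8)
\emph{Plan.} I would argue by induction on $|U|$, peeling off one small tree component at a time. The first ingredient is a degree-constrained spanning-forest lemma: a bipartite graph $G=(U,V,E)$ contains a forest $H\subseteq E$ with $d_H(u)=h$ for every $u\in U$ if and only if $|\Gamma(S)|\ge(h-1)|S|+1$ for every nonempty $S\subseteq U$. This I would prove by matroid intersection --- intersect the partition matroid on $E$ that allows at most $h$ edges at each $u\in U$ with the cycle matroid of $G$, and check that the matroid-intersection min-max formula reduces to the stated Hall-type inequality --- or by a direct augmenting-path argument. Note that the hypothesis of the theorem forces $|\Gamma(S)|$ to be an integer strictly larger than the integer $(h-1)|S|$, so $|\Gamma(S)|\ge(h-1)|S|+1$ always holds and the lemma always applies; when $|U|\le k$ its output $H$ has every component a tree with at most $h|U|\le hk$ edges, so $H$ is itself an $(h,hk)$-matching. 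This is the base case. The same lemma applied to an induced subgraph $G[P\cup Q]$ with $|Q|=(h-1)|P|+1$ satisfying $|\Gamma_G(P')\cap Q|\ge(h-1)|P'|+1$ for all nonempty $P'\subseteq P$ yields a \emph{spanning} tree of $G[P\cup Q]$ in which every vertex of $P$ has degree exactly $h$ (such a Hall condition already forces $G[P\cup Q]$ connected, and a forest with $h|P|$ edges on $h|P|+1$ vertices that meets every vertex must be a spanning tree).

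\emph{The peeling step.} Put $\epsilon=\frac{d-h+1}{k+1+(d-h-1)\lceil k/h\rceil}$, so the hypothesis reads $|\Gamma(S)|\ge(h-1+\epsilon)|S|$, and a short calculation gives $1/\epsilon\le k$. Splitting off connected components if $G$ is disconnected and using the base case if $|U|\le k$, we may assume $G$ connected with $|U|>k\ge\lceil1/\epsilon\rceil$. Grow a tree in $G$ with exactly $\lceil1/\epsilon\rceil$ vertices in $U$ and let $P$ be its $U$-part, so $1/\epsilon\le|P|\le k$. The goal is to choose $Q\subseteq\Gamma(P)$ with $|Q|=(h-1)|P|+1$ such that (i) $|\Gamma_G(P')\cap Q|\ge(h-1)|P'|+1$ for every nonempty $P'\subseteq P$, and (ii) $|\Gamma_G(T)\cap Q|\le|\Gamma_G(T)|-(h-1+\epsilon)|T|$ for every $T\subseteq U\setminus P$. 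Given such a $Q$, part (i) and the forest lemma give a spanning tree $F_1$ of $G[P\cup Q]$ with all $P$-degrees equal to $h$, a tree with $h|P|\le hk$ edges, while (ii) is precisely the statement that $G-P-Q$ still satisfies the hypothesis. Since $G-P-Q$ has fewer vertices in $U$, induction produces an $(h,hk)$-matching $F'$ of it; as $F'$ avoids $Q$ and $P$, the set $F_1\cup F'$ is an $(h,hk)$-matching of $G$, completing the induction.

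\emph{The main obstacle.} The crux is producing $Q$ subject to the competing families (i) and (ii). The lower bounds (i) are just the hypothesis restricted to subsets of $P$; each single upper bound in (ii) is feasible because, using $(h-1)|P|+1\le(h-1+\epsilon)|P|$ --- which is exactly where $|P|\ge1/\epsilon$ enters --- together with the hypothesis applied to $P\cup T$, the ``pool'' $\Gamma_G(P)\setminus\Gamma_G(T)$ is always at least as large as the number of vertices of $Q$ forced to avoid $\Gamma_G(T)$. Turning this into \emph{simultaneous} feasibility of all of (i) and (ii) is the technical heart: I would either encode it as a single max-flow or matroid-intersection feasibility problem whose solvability is governed by a submodular cut bound, or build $Q$ greedily and repair a violated constraint by a local vertex swap, proving that this process terminates. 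Verifying that the bookkeeping closes is where the precise form of $\epsilon$ is used in full --- in particular the $\lceil k/h\rceil$ in its denominator, and the dependence on the left-degree bound $d(u)\le d$, which is what controls $|\Gamma_G(P)|$ in terms of $|P|$ and hence the sizes of the pools. Theorem~\ref{summary} then follows, since its coefficient is the $d\to\infty$ limit of the one here.
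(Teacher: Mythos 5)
Your scaffolding is mostly sound: the hypothesis does force $|\Gamma(S)|\ge (h-1)|S|+1$ by integrality, the degree-constrained forest lemma is a correct (and provable) matroid-intersection statement, $1/\epsilon\le k$ does hold for $\epsilon=\frac{d-h+1}{k+1+(d-h-1)\lceil k/h\rceil}$, and a forest on $h|P|+1$ vertices with $h|P|$ edges is indeed a spanning tree, so the base case and the shape of the induction are fine. The genuine gap is exactly where you label the ``technical heart'': you never prove that a set $Q$ satisfying (i) and (ii) simultaneously exists, and that existence statement carries essentially the entire content of the theorem. The lower bounds (i) force $Q$ to contain many vertices of each $\Gamma_G(P')$, while the upper bounds (ii) force $Q$ to avoid many vertices of each $\Gamma_G(T)$ for every $T\subseteq U\setminus P$; these pull in opposite directions, and the only feasibility check you actually perform (one constraint of (ii) at a time against the total budget $|Q|=(h-1)|P|+1$) is far weaker than simultaneous feasibility of exponentially many competing constraints. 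A max-flow/submodular or greedy-with-swaps argument is named but not carried out, and it is precisely inside such an argument that the exact threshold, the left-degree bound $d(u)\le d$, and the $\lceil k/h\rceil$ term would have to do their work; as it stands none of that arithmetic is verified. A further unaddressed point: your $P$ is an arbitrary subtree with $\lceil 1/\epsilon\rceil$ left vertices, but since the bound is tight (Proposition \ref{counter}) and the near-extremal examples of Section \ref{tight} are long chains in which the structure of any $(h,hk)$-matching is forced globally, nothing guarantees that every such $P$ admits a valid $Q$; the choice of $P$ would plausibly have to be adaptive, which your plan does not provide for.

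For contrast, the paper sidesteps this feasibility problem entirely: it takes an edge-minimal counterexample, uses the redundant-edge machinery of Section \ref{prelim} (the functions $f$ and $g$ and Lemma \ref{tree}) to show the counterexample must be a tree, and then reaches a contradiction by the branch-counting argument of Section \ref{secmain}, where the bounds involving $k+1+(d-h-1)\lceil k/h\rceil$ are verified on explicit tree components. If you want to salvage your route, the missing piece is a genuine selection lemma (with an adaptive choice of $P$) proving simultaneous feasibility of (i) and (ii) under the stated threshold; without it the proposal is a plan rather than a proof.
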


By taking the limit as $d$ tends to infinity, one can see that Theorem \ref{summary} follows directly from Theorem \ref{main}. Taking $h,k=2$ and $d=3$, we also see that Theorem \ref{main} proves the conjecture of Bonacina, Galesi, Huynh and Wollan \cite{CNF} mentioned above.

Showing these $\alpha$-bounds is a little tricky; unlike the case of Hall's Theorem, the conditions in Theorem \ref{main} are sufficient but not necessary (for example, $K_{2,3}$ contains a $(2,4)$-matching but does not satisfy the $\frac{5}{3}$-neighbourhood condition); and it is necessary to provide an infinite family of examples for $\alpha$ increasing to the relevant threshold as the example increases in size. Bonacina, Galesi, Huynh and Wollan \cite{CNF} provide a example to show that for any $\alpha < \frac{5}{3}$, there exists a bipartite graph $G$ with maximal left degree $3$ which satisfies the $\alpha$-neighbourhood condition but does not contain a $(2,4)$-matching. We will modify this particular family of examples to give examples for all values of $d$, $k$ and $h$. These examples show that the sufficient neighbourhood conditions given in Theorem \ref{main} are in fact optimal.

\begin{prop}\label{counter}
Let $k \ge 2$ and $d > h \ge 1$ be positive numbers and $\alpha < h-1+\frac{d-h+1}{k+1+(d-h-1)\lceil k/h \rceil}$. Then there exists a bipartite graph $G$ with maximum left degree at most $d$ which satisfies the $\alpha$-neighbourhood condition but does not contain an $(h,hk)$-matching.
\end{prop}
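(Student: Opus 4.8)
The plan is to exhibit, for each admissible triple $(h,k,d)$, an explicit infinite family $(G_n)_{n\ge 1}$ of bipartite graphs with maximum left degree at most $d$, containing no $(h,hk)$-matching, and whose minimum expansion ratio $\beta_n := \min_{\emptyset\ne S\subseteq U}|\Gamma(S)|/|S|$ increases to $\beta := h-1+\tfrac{d-h+1}{k+1+(d-h-1)\lceil k/h \rceil}$ from below; given $\alpha<\beta$ one then simply picks $n$ with $\beta_n\ge\alpha$. Following the $d=3$, $(h,k)=(2,2)$ example of Bonacina--Galesi--Huynh--Wollan, $G_n$ will be built by concatenating $n$ copies of a single gadget $Q=Q(h,k,d)$ along a bounded set of distinguished ``port'' vertices of $V$, so that $G_n$ is a long chain. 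The gadget is organised around a ``deficiency core'' — a near-complete bipartite piece on $U$-vertices of degree $d$ that, in isolation, fails to admit its share of small tree-components for essentially the counting reason that $K_{3,4}$ (and more generally $K_{d,d}$) has no $(h,hk)$-matching — together with connector vertices whose only role is to raise the $V$-count. One takes the number of $U$-vertices of $Q$ to be $k+1+(d-h-1)\lceil k/h \rceil$ and the number of $V$-vertices it contributes to be $(h-1)\bigl(k+1+(d-h-1)\lceil k/h \rceil\bigr)+(d-h+1)$, so that the per-gadget ratio is exactly $\beta$; the two ends of the chain then cost only a bounded number of $V$-vertices, which pushes $\beta_n$ just below $\beta$ while $\beta_n\uparrow\beta$.

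To show $G_n$ has no $(h,hk)$-matching I would argue by contradiction. If $H$ is one, every component of $H$ is a tree with at most $hk$ edges and so meets at most $k$ vertices of $U$. Restricting $H$ to a single gadget, the degree-$d$ core forces the induced partial forest to leave some component unfinished and carried out through a port into the neighbouring gadget; travelling from gadget to gadget, this partial component must absorb at least $h$ further edges per gadget crossed (this is exactly where $\lceil k/h \rceil$ and the bound $d$ enter the sizing of $Q$), so once $n$ is large it is forced to exceed $hk$ edges — a contradiction. Making this propagation rigorous is one delicate point: one introduces a finite set of possible port-states, shows each gadget sends every incoming state to a proper subset of outgoing states, and rules out a globally consistent assignment along a chain of every sufficiently large length $n$.

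The main obstacle, however, is verifying that $G_n$ satisfies the $\alpha$-neighbourhood condition for \emph{every} $S\subseteq U$, not merely for $S=U$ — exactly as with Hall's theorem, this is a statement about all subsets, and the gadget must be designed so as to introduce no locally deficient set. The plan here is: first observe that a single $U$-vertex has at least $h>\beta>\alpha$ neighbours, so a minimiser of $|\Gamma(S)|/|S|$ may be taken ``saturated'' inside each gadget it meets; then reduce to sets $S$ that are unions of whole gadgets together with a controlled remainder, and show, by direct but somewhat intricate case analysis over the structure of $Q$, that the worst such $S$ — in effect an interval of the chain, or all of $U$ — has ratio $\beta_n=\beta-\Theta(1/n)$. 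This gives $\beta_n\uparrow\beta$, and choosing $n$ large enough that simultaneously $G_n$ has no $(h,hk)$-matching and $\beta_n\ge\alpha$ finishes the proof. (The hypothesis $k\ge 2$ is needed precisely here: when $k=1$ an $(h,h)$-matching is a union of disjoint stars, Hall's theorem makes the $h$-neighbourhood condition both necessary and sufficient, and there is no slack to exploit.)
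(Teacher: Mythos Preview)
Your high-level plan --- build a family $G_n$ whose global ratio $|V_n|/|U_n|$ tends to $\beta$ from below, check that this ratio is the worst over all subsets, and check that $G_n$ has no $(h,hk)$-matching --- is the same as the paper's, but the construction you propose is structurally different. The paper does \emph{not} use a chain of identical gadgets. It fixes a single ``base graph'' $H$: a tree with one central $U$-vertex of degree $d$ whose $d$ branches are copies of the path-like graph $I_q$ with $q\in\{\lceil k/h\rceil,\ \lceil k/h\rceil+1\}$ chosen so that any $(h,hk)$-matching would need the central vertex to shed more branches than it can afford. The role of the large parameter $n$ is played by an \emph{odd cycle} gadget $J_n$, glued to $H$ at one vertex $v_1$, whose only job is to force $v_1$ into a component of size at least $h\lceil k/h\rceil$ in any $(h,hk)$-matching while contributing per-block ratio exactly $\beta$. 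The obstruction to a matching is thus entirely local to $H$; there is no ``propagation along a chain'' argument.

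This is where your proposal has a genuine gap. In a chain the two ends are free, and nothing you have written prevents a putative matching from starting fresh at each gadget rather than carrying a growing component through the ports; your assertion that the partial component ``must absorb at least $h$ further edges per gadget crossed'' is exactly the content that needs proof, and it is sensitive to the internal structure of $Q$, which you have not specified. The phrase ``near-complete bipartite piece on $U$-vertices of degree $d$'' does not pin down a graph, and your appeal to $K_{d,d}$ is misleading: $K_{d,d}$ has expansion ratio $1$, far below $\beta$, and its $U$-vertices already have degree $d$ so there is no room to attach the ``connector vertices'' you need to raise the $V$-count without violating the left-degree bound. The paper's $H$ is a tree, and the failure of a matching comes from branch-length arithmetic around a single high-degree vertex, not from density. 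Until you actually write down $Q$ and verify both the port-state propagation and the subset-expansion claim against it, the plan remains a plausible heuristic rather than a proof; the paper's explicit construction sidesteps these difficulties by localising the obstruction and using a cycle (so there is no free end) rather than a chain.
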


The paper is organised as follows. In Section \ref{prelim} we prove some preliminary results regarding bipartite graphs that satisfy a neighbourhood condition which is no longer satisfied upon the deletion of any edge. In Section \ref{secmain} we prove Theorem \ref{main}. In Section \ref{tight} we expose the examples which prove Proposition \ref{counter} and so demonstrate the bounds given in Theorem \ref{main} are tight. Finally, in Section \ref{starsect}, we consider a related covering problem.

\section{Preliminary results}\label{prelim}
As stated before, we will prove Theorem \ref{main} by induction on the number of edges in the graph. For a bipartite graph $G = (U,V,E)$, we will call an edge, $e \in E$, \it $\alpha$-redundant \rm if $G-e$ satisfies the $\alpha$-neighbourhood condition. In other words, an edge is redundant if it is not necessary for the satisfaction of neighbourhood constraints. This section will show that if a connected bipartite graph satisfies the $\alpha$-neighbourhood condition and has no redundant edge, then (subject to a couple of other restraints) it must be a tree. We will start the section by introducing some notation which will be used throughout the paper.

Let $G=(U,V,E)$ be a bipartite graph. For $A \subset U$ and $\alpha > 0$, let $h(A,\alpha) = |\Gamma(A)| - \alpha|A|$ (so $G=(U,V,E)$ satisfies the $\alpha$-neighbourhood condition if and only if $h(A,\alpha) \ge 0$ for each $A \subset U$). Then for $uv \in E$, $u \in U, v \in V$, let $F_{uv} = \{A \subset U : u \in A, v \notin \Gamma(A \setminus u) \}$ and $G_{uv} = \{A \subset U\setminus u : v \in \Gamma(A)\}$. Then we define functions $f$ and $g$:
	\beqs
		f(uv,\alpha) &=& \min_{A \in F_{uv}}h(A,\alpha) \nonumber \\
		g(uv,\alpha) &=& \min_{A \in G_{uv}}h(A,\alpha) \nonumber
	\eeqs
where we put $g(uv, \alpha) = 1$ if $d(v) = 1$ (and so $G_{uv} = \emptyset$). We will drop the $\alpha$ when obvious or when it's value is inconsequential.

$f(uv)$ can be thought of as a measure of how redundant an edge $uv$ is and $g(uv)$ can be thought of as a measure of how redundant the vertex $v$ is to the graph $G-u$; in other words, how little is it required by other vertices. For a graph satisfying the $\alpha$-neighbourhood condition it is clear that $f(uv,\alpha),g(uv,\alpha) \ge 0$ for each $uv \in E$. The next proposition analyses some properties of $f$ and $g$ on a graph satisfying the $\alpha$-neighbourhood condition.

\begin{prop}\label{properties}
Let $\alpha > 0$ and $G = (U,V,E)$ be a bipartite graph which satisfies the $\alpha$-neighbourhood condition.
\begin{itemize}
\item[(i)] An edge $uv$ is $\alpha$-redundant if and only if $f(uv,\alpha) \ge 1$.
\item[(ii)] For $v \in V$, $u,w \in \Gamma(v)$, $F_{wv} \subset G{uv}$ and so $g(uv,\alpha) \le f(wv,\alpha)$.
\item[(iii)] Suppose further that $G$ does not contain a redundant edge. For an edge $uv \in E$, $g(uv,\alpha) \le 1$ with equality if and only if $d(v) = 1$.
\end{itemize}
\end{prop}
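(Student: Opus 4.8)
The plan is to prove the three parts in order, using the definitions of $f$ and $g$ and the fact that $h(A,\alpha)$ behaves well under unions and intersections. For part (i), I would unwind the definition of $\alpha$-redundancy: $uv$ is $\alpha$-redundant iff $G-e$ still satisfies the neighbourhood condition, i.e.\ $|\Gamma_{G-e}(A)| \ge \alpha|A|$ for all $A \subset U$. The neighbourhood of $A$ changes only when $u \in A$ and $v$ is not in $\Gamma(A\setminus u)$, which is precisely the condition $A \in F_{uv}$; for such $A$ we have $|\Gamma_{G-e}(A)| = |\Gamma(A)| - 1$, so the condition becomes $h(A,\alpha) \ge 1$. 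Taking the minimum over $A \in F_{uv}$ gives $f(uv,\alpha)\ge 1$, and all other sets $A$ are unaffected, so the equivalence follows.

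For part (ii), given $v \in V$ and $u,w \in \Gamma(v)$ with $u \neq w$, I want to show $F_{wv} \subset G_{uv}$. If $A \in F_{wv}$ then $w \in A$ but $u \notin A$ — wait, that is not immediate; I would instead take $A \in F_{wv}$ and consider $A \setminus u$. Actually the cleanest route: if $A \in F_{wv}$ then $w \in A$ and $v \notin \Gamma(A \setminus w)$; in particular $u \notin A$ (else $v \in \Gamma(A\setminus w)$ via the edge $uv$), so $A \subset U \setminus u$, and $v \in \Gamma(A)$ since $w \in A$ and $wv \in E$. Hence $A \in G_{uv}$. Therefore $G_{uv} \supseteq F_{wv}$, and taking minima of $h(\cdot,\alpha)$ over the larger set $G_{uv}$ gives $g(uv,\alpha) \le f(wv,\alpha)$.

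For part (iii), I would argue as follows. If $d(v) = 1$ then $G_{uv} = \emptyset$ and by convention $g(uv,\alpha) = 1$, giving equality. If $d(v) \ge 2$, pick $w \in \Gamma(v) \setminus u$; since $G$ has no redundant edge, the edge $wv$ is not $\alpha$-redundant, so by part (i), $f(wv,\alpha) < 1$, i.e.\ $f(wv,\alpha) \le 1 - \delta$ for some positive slack — but more to the point, $f(wv,\alpha)$ is achieved by some $A \in F_{wv}$ with $h(A,\alpha) < 1$; combined with part (ii), $g(uv,\alpha) \le f(wv,\alpha) < 1$. Since $G$ satisfies the neighbourhood condition we have $g(uv,\alpha) \ge 0$, so $0 \le g(uv,\alpha) < 1$; in particular equality $g(uv,\alpha) = 1$ holds only in the $d(v)=1$ case. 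One subtlety to check: whether $h$ takes integer values so that "$f(wv,\alpha) \ge 1$" is genuinely the negation of "$f(wv,\alpha) < 1$" — but since $|\Gamma(A)|$ and $|A|$ are integers, $h(A,\alpha) \ge 1 \iff h(A,\alpha) > \alpha \lfloor\text{something}\rfloor$... actually $h$ need not be an integer when $\alpha$ is not, so I should be careful to phrase part (i) as stated ($f \ge 1$) and part (iii)'s strict inequality follows directly without needing integrality.

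\textbf{Main obstacle.} The only delicate point I anticipate is the containment $F_{wv} \subset G_{uv}$ in part (ii): one must verify that membership in $F_{wv}$ forces $u \notin A$, which uses the hypothesis $u \in \Gamma(v)$ crucially (so that if $u$ were in $A$ then $v \in \Gamma(A \setminus w)$, contradicting $A \in F_{wv}$). Everything else is routine unwinding of definitions. I would also double-check the edge case $u = w$ is excluded in the statement of (ii), and that the convention $g(uv,\alpha) = 1$ when $d(v)=1$ is consistently applied in (iii).
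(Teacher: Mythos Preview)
Your proposal is correct and follows essentially the same route as the paper's proof: part (i) by unwinding the definition and noting that only sets in $F_{uv}$ lose a neighbour upon deletion of $uv$; part (ii) via the containment $F_{wv}\subset G_{uv}$; part (iii) by applying (i) and (ii) to some $w\in\Gamma(v)\setminus u$. In fact you are slightly more explicit than the paper in (ii), where you spell out why $u\notin A$ for $A\in F_{wv}$ (the paper simply asserts $S\subset U\setminus u$), and your caveat about needing $u\neq w$ is well-placed.
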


\begin{proof}
Let $\alpha > 0$ and $G = (U,V,E)$ be a bipartite graph which satisfies the $\alpha$-neighbourhood condition. Suppose $uv \in E$ and let $H = G - uv$.

\begin{itemize}
\item[(i)] First suppose that $f(uv,\alpha) < 1$ and let $A \in F_{uv}$ be such that $h_G(A) = f(uv,\alpha)$. Note that by definition of $F_{uv}$, $\Gamma_H(A) = \Gamma_G(A) \setminus v$ and so $h_H(A) = h_G(A) - 1 < 0$. We can then conclude that $uv$ is not redundant since $H$ does not satisfy the $\alpha$-neighbourhood condition. 

Now suppose that $uv$ is not redundant. By definition, there must be some $A \subset U$ such that $h_H(A) < 0$. Note that since $G$ satisfies the $\alpha$-neighbourhood condition, such a subset $A$ must contain $u$ and that $v \notin \Gamma(A \setminus u)$. It is then clear that $A \in F_{uv}$ and so $f(uv, \alpha) \le h_G(A) = h_H(A) + 1 < 1$.

\item[(ii)] Note that if $S \in F_{wv}$, then $S \subset U\setminus u$ and $v \in \Gamma(S)$ and so $S \in G_{uv}$. It follows that if $u,w \in \Gamma(v)$, then $F_{wv} \subset G_{uv}$ and so $g(uv,\alpha) \le f(wv,\alpha)$.

\item[(iii)] Recall that if $d(v) = 1$, then $g(uv,\alpha) = 1$ by definition. So suppose that $d(v) \ge 2$ and pick some $w \in \Gamma(v) \setminus u$. Then since $vw$ is not a redundant edge, $f(wv,\alpha) < 1$ and the results follows from (ii).
\end{itemize}
\end{proof}

The following lemma considers the effect of applying $h$ to a union of two sets and will be used extensively in the remainder of the section.

\begin{lem}\label{union}
Let $G = (U,V,E)$ be a bipartite graph and fix some $\alpha>0$. Then for $A,B \subset U$,
	\beqs
		h(A \cup B) = h(A) + h(B) - h(A \cap B) - (|\Gamma(A)\cap \Gamma(B)| - |\Gamma(A \cap B)|). \nonumber
	\eeqs
\end{lem}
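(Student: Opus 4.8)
The plan is to prove the identity by a direct inclusion–exclusion computation on the two pieces $|\Gamma(A\cup B)|$ and $|A\cup B|$ separately. Writing $h(A)=|\Gamma(A)|-\alpha|A|$, the term $-\alpha|A\cup B|$ is handled immediately by the elementary identity $|A\cup B|=|A|+|B|-|A\cap B|$, which contributes $-\alpha|A|-\alpha|B|+\alpha|A\cap B|$ to the right-hand side; this exactly matches the $-\alpha$-parts of $h(A)+h(B)-h(A\cap B)$. So the whole statement reduces to verifying the neighbourhood identity
\beqs
|\Gamma(A\cup B)| = |\Gamma(A)| + |\Gamma(B)| - |\Gamma(A\cap B)| - \bigl(|\Gamma(A)\cap\Gamma(B)| - |\Gamma(A\cap B)|\bigr). \nonumber
\eeqs
Equivalently, since $\Gamma(A\cup B)=\Gamma(A)\cup\Gamma(B)$ for any sets of vertices in a graph, the claim is just $|\Gamma(A)\cup\Gamma(B)| = |\Gamma(A)|+|\Gamma(B)|-|\Gamma(A)\cap\Gamma(B)|$, which is ordinary inclusion–exclusion for two sets. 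The apparently extra terms $-|\Gamma(A\cap B)|$ and $+|\Gamma(A\cap B)|$ on the right simply cancel.

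Concretely, I would first record the set identity $\Gamma(A\cup B)=\Gamma(A)\cup\Gamma(B)$, noting it holds because a vertex $v\in V$ has a neighbour in $A\cup B$ iff it has a neighbour in $A$ or in $B$. Then I would apply $|X\cup Y|=|X|+|Y|-|X\cap Y|$ with $X=\Gamma(A)$, $Y=\Gamma(B)$ to get $|\Gamma(A\cup B)|=|\Gamma(A)|+|\Gamma(B)|-|\Gamma(A)\cap\Gamma(B)|$. Finally I would substitute into the definition of $h$, use $|A\cup B|=|A|+|B|-|A\cap B|$, and collect terms, inserting and subtracting $|\Gamma(A\cap B)|$ to match the stated form. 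The rearrangement is purely bookkeeping.

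There is essentially no obstacle here — the lemma is a formal identity and its only subtlety is cosmetic: the author has deliberately written it with the "defect" term $|\Gamma(A)\cap\Gamma(B)|-|\Gamma(A\cap B)|$ isolated (this quantity is $\ge 0$ since $\Gamma(A\cap B)\subseteq\Gamma(A)\cap\Gamma(B)$), presumably because that nonnegative defect is exactly what gets exploited later to show $h(A\cup B)\le h(A)+h(B)-h(A\cap B)$, a submodularity-type inequality. So in the write-up I would present the clean chain of equalities and perhaps remark that the bracketed term is nonnegative, which is the form in which the lemma will actually be used.
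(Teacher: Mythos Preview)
Your proposal is correct and matches the paper's own proof essentially line for line: both use $\Gamma(A\cup B)=\Gamma(A)\cup\Gamma(B)$ together with inclusion--exclusion for the neighbourhoods and for $|A\cup B|$, then substitute into the definition of $h$ and regroup to isolate the term $|\Gamma(A)\cap\Gamma(B)|-|\Gamma(A\cap B)|$. Your extra remark that this bracketed quantity is nonnegative is a useful aside but not part of the paper's proof.
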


\begin{proof}
Note that $|\Gamma(A\cup B)| = |\Gamma(A)| + |\Gamma(B)| - |\Gamma(A) \cap \Gamma(B)|$ and $|A \cup B| = |A| + |B| - |A\cap B|$, so
	\beqs
		h(A \cup B) &=& |\Gamma(A \cup B)| - \alpha|A \cup B| \nonumber \\
		&=& |\Gamma(A)| + |\Gamma(B)| - |\Gamma(A) \cap \Gamma(B)| - \alpha(|A| + |B| - |A\cap B|) \nonumber \\
		&=& h(A) + h(B) - (|\Gamma(A) \cap \Gamma(B)| - \alpha |A \cap B|) \nonumber \\
		&=& h(A) + h(B) - (|\Gamma(A) \cap \Gamma(B)| - |\Gamma(A \cap B)|) - (|\Gamma(A \cap B)| - \alpha |A \cap B|) \nonumber \\
		&=& h(A) + h(B) - h(A \cap B) - (|\Gamma(A)\cap \Gamma(B)| - |\Gamma(A \cap B)|). \nonumber
	\eeqs
\end{proof}

We are now in a position to show that, under additional constraints regarding $h$, a bipartite graph satisfying the $\alpha$-neighbourhood condition with no redundant edges, must be a tree.

\begin{lem}\label{tree}
Let $\alpha > 0$ and let $G = (U,V,E)$ be a bipartite graph with no isolated vertices. Suppose that $h(U,\alpha) \in [0,1)$,  $h(S,\alpha) > h(U,\alpha)$ for each $S \subsetneq U$, and that $G$ contains no $\alpha$-redundant edges. Then $G$ is a tree.
\end{lem}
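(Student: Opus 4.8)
The plan is to argue by contradiction: suppose $G$ is not a tree, so it contains a cycle and hence (being bipartite) has $|E| \geq |U| + |V|$. The strategy is to show that under the hypotheses — $h(U) \in [0,1)$, $h(S) > h(U)$ for all proper $S \subsetneq U$, no $\alpha$-redundant edges, no isolated vertices — the graph is forced to be ``tight'' in a way incompatible with having a cycle. Since there are no isolated vertices, every $v \in V$ has $d(v) \geq 1$, and by Proposition \ref{properties}(iii), for every edge $uv$ we have $g(uv,\alpha) \leq 1$. The key will be to extract, from the existence of a cycle, a proper subset $A \subsetneq U$ with $h(A,\alpha) \leq h(U,\alpha)$, contradicting the minimality hypothesis.

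First I would set up the following local picture. Pick any vertex $v \in V$ with $d(v) \geq 2$ (if no such vertex exists then $G$ already has $|E| = |V| \le |U|$-type behaviour and, being connected-ish with no isolated vertices, would be a forest; more carefully, if every $v$ has degree $1$ then each component containing a $V$-vertex is a star and the whole graph is a forest of stars, hence a forest — and one checks the other hypotheses force it to be a single tree). So assume some $v$ has $u, w \in \Gamma(v)$ distinct. Since $uv$ and $wv$ are not redundant, Proposition \ref{properties}(i) gives $f(uv,\alpha) < 1$ and $f(wv,\alpha) < 1$; pick $A \in F_{uv}$ and $B \in F_{wv}$ attaining these minima, so $h(A) < 1$, $h(B) < 1$, with $u \in A$, $v \notin \Gamma(A \setminus u)$, and similarly for $B$. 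Now apply Lemma \ref{union} to $A$ and $B$. The term $|\Gamma(A) \cap \Gamma(B)| - |\Gamma(A \cap B)|$ is $\geq 0$ and, crucially, is strictly positive because $v \in \Gamma(A) \cap \Gamma(B)$ but $v \notin \Gamma(A \cap B)$ (as $A \cap B \subseteq A \setminus \{u,\dots\}$ avoids $v$ in its neighbourhood — here I need to be a little careful about whether $u = w$, which it isn't, so $A \cap B \subseteq U \setminus \{u\}$ or $\subseteq U \setminus \{w\}$ depending, and in either case $v \notin \Gamma(A \cap B)$). Hence
\[
h(A \cup B) \leq h(A) + h(B) - h(A \cap B) - 1.
\]
If $A \cap B \neq \emptyset$ then $h(A \cap B) \geq h(U) \geq 0$ by hypothesis (or $h(A\cap B) > h(U)$ if it is proper), giving $h(A \cup B) < 1 + 1 - 0 - 1 = 1$; if $A \cup B \subsetneq U$ this contradicts $h(A \cup B) > h(U) \geq 0$ only if we can also bound it below... so more work is needed to close this. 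The honest route: iterate this ``merging'' over all high-degree vertices of $V$ to build up a set $A^* \subseteq U$ with $h(A^*)$ small, track the number of cycles absorbed, and show $A^* = U$ with $h(U) \geq$ (number of independent cycles) $\geq 1$, contradicting $h(U) < 1$.

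The main obstacle — and where I expect the real work to lie — is controlling the error term $|\Gamma(A) \cap \Gamma(B)| - |\Gamma(A \cap B)|$ when merging many sets, so that each independent cycle of $G$ contributes at least $1$ to the final $h$-value. Concretely, I would choose a spanning structure: since $G$ is connected with no isolated vertices and is not a tree, fix a spanning tree $T$ and a non-tree edge $e_0 = u_0 v_0$; the plan is to build $A^* = U$ as a union of the sets $F_{uv}$ over a spanning collection of edges, using Lemma \ref{union} repeatedly, and to show that the excess $\sum (|\Gamma(A_i) \cap \Gamma(A_{<i})| - |\Gamma(A_i \cap A_{<i})|)$ over the course of the merge is at least the cyclomatic number $|E| - |U| - |V| + (\text{components}) \geq 1$. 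Combined with each $h(A_i) \geq 0$ and each intersection term $h(A_i \cap A_{<i}) \geq 0$, telescoping yields $h(U) \geq 1$, contradicting $h(U) \in [0,1)$. The delicate points are (a) verifying the non-isolation / no-redundancy hypotheses are exactly what guarantee every relevant $F_{uv}$ is nonempty and every chosen witness has $h < 1$, and (b) the bookkeeping that converts ``extra edges'' into ``extra units of $h$'' without double-counting — this is the combinatorial heart of the lemma and where the hypothesis $h(S) > h(U)$ for proper $S$ (not just $h(S) \geq 0$) is presumably used to prevent the merge from stalling on a proper subset.
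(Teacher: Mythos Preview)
Your proposal has a genuine gap at the heart of the argument. You correctly identify that for $v$ with $d(v)\ge 2$ and $u,w\in\Gamma(v)$, taking witnesses $A\in F_{uv}$, $B\in F_{wv}$ and applying Lemma~\ref{union} gives $h(A\cup B)\le h(A)+h(B)-h(A\cap B)-1$. But this only yields $h(A\cup B)<1$, which is perfectly consistent with $h(A\cup B)>h(U)\ge 0$; no contradiction arises. Your proposed fix is to ``iterate the merging'' and argue that the accumulated excess terms sum to at least the cyclomatic number, forcing $h(U)\ge 1$. You yourself flag the obstacle: controlling the intersection terms $h(A\cap B)$ in the telescoping. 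These terms are subtracted, can be arbitrarily close to $1$, and there is no mechanism in your sketch to bound them. Nor is it clear why the number of ``useful'' merges (those with strictly positive excess) should match the cycle rank rather than, say, the number of $V$-vertices of degree $\ge 2$. The proposal is an outline of a hope, not a proof.

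The paper's argument is quite different and hinges on an extremal choice you do not make: among all edges lying on a cycle, pick $uv$ with $g(uv,\alpha)$ \emph{minimal}, and take $A\in F_{uv}$, $B\in G_{uv}$ (note: $G_{uv}$, not $F_{wv}$) realising $f(uv)$ and $g(uv)$. After reducing to the case where $G[A\cup\Gamma(A)]$ and $G[B\cup\Gamma(B)]$ are connected and showing $\Gamma(A)\cap\Gamma(B)=\Gamma(A\cap B)\cup\{v\}$, one gets $h(A\cup B)=f(uv)+g(uv)-h(A\cap B)-1$. The point of minimising $g$ is this: if $D=A\cap B\ne\emptyset$, a path argument inside $A\cup B$ produces another cycle edge $a_sa_{s+1}$ with $D\in G_{a_sa_{s+1}}$, so $h(D)\ge g(a_sa_{s+1})\ge g(uv)$, whence $h(A\cup B)\le f(uv)-1<0$. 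If $D=\emptyset$, the cycle through $uv$ yields an edge with $C=A\cup B$ in its $G$-set, giving $h(C)\ge g(uv)$, while the displayed formula gives $h(C)=f(uv)+g(uv)-1<g(uv)$. Either way one reaches $h<0$ on a nonempty set. The extremal choice of $g(uv)$ is precisely what tames the intersection term; your merging scheme has no analogue of this device.
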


\begin{proof}
Since $\alpha$ is fixed, we will write $h(S),f(uv)$ and $g(uv)$ in place of $h(S,\alpha), f(uv,\alpha)$ and $g(uv,\alpha)$ respectively.

First suppose that $G$ is not connected. Let $A \cup \Gamma(A)$ be the vertex set of a component of $G$ with $A \subset U$ and let $B = U \setminus A$. As $\Gamma(A)$ and $\Gamma(B)$ are disjoint we have $h(U) = h(A) + h(B)$ by Lemma \ref{union}. Note that $h(B) > h(U) > 0$ by assumption and so $h(U) > h(A) +h(U)$. We have arrived at a contradiction since $h(A) > 0$. So $G$ is connected.

Now suppose that $G$ contains a cycle. Choose an edge $uv$ that belongs to a cycle which has 
$g(uv)$ as small as possible. Choose $A \in F_{uv}$ and $B \in G_{uv}$ such that $h(A) = f(uv)$ and $h(B) = g(uv)$.

Suppose that $G[A \cup \Gamma(A)]$ is disconnected and $J \subset A$ is such that $G[J \cup \Gamma(J)]$ is a component of $G[A \cup \Gamma(A)]$. $\Gamma(J)$ and $\Gamma(A \setminus J)$ are disjoint, and so $h(A) = h(J) + h(J \setminus A)$. Note that since both $J$ and $J \setminus A$ are non-trivial subsets of $U$, we have $\min\{h(J), h(J \setminus A)\} > 0$ and so $h(A) > \max\{h(J), h(J \setminus A)\}$. On the other hand, assuming without loss of generality that $u \in J$, we have that $J \in F_{uv}$ and so $h(J) \ge h(A)$, a contradiction. Similarly, suppose that $G[B \cup \Gamma(B)]$ is disconnected and that $K \subset B$ is such that $G[K \cup \Gamma(K)]$ is a component of $G[B \cup \Gamma(B)]$. $\Gamma(K)$ and $\Gamma(B \setminus K)$ are disjoint and so $h(B) = h(K) + h(B \setminus K)$. Since $K$ and $B \setminus K$ are non-trivial subset of $U$, we have $\min\{h(K), h(B \setminus K)\} > 0$ and so $\max\{h(K), h(B \setminus K)\} < h(B)$. On the other hand, assuming without loss of generality that $v \in \Gamma(K)$, we see that $K \in G_{uv}$ and so $h(K) \ge h(B)$, a contradiction. Thus we may assume that both $G[A \cup \Gamma(A)]$ and $G[B \cup \Gamma(B)]$ are connected.

Letting $C = A \cup B$ and $D = A \cap B$, an application of Lemma \ref{union} gives
	\beqs
		h(C) &=& h(A) + h(B) - h(D) - (|\Gamma(A)\cap \Gamma(B)| - |\Gamma(A \cap B)|) \nonumber \\
		&=& f(uv) + g(uv) - h(D) - (|\Gamma(A)\cap \Gamma(B)| - |\Gamma(A \cap B)|). \label{cycle1}
	\eeqs
Note that $G$ satisfies the $\alpha$-neighbourhood condition, has no redundant edges, and $d(v) \ge 2$. The conditions for Proposition \ref{properties} are therefore satisfied and so $f(uv),g(uv) <1$. If $|\Gamma(A)\cap \Gamma(B)| - |\Gamma(A \cap B)| \ge 2$, then $h(C) < 0$ and we arrive at a contradiction. Therefore $|\Gamma(A)\cap \Gamma(B)| - |\Gamma(A \cap B)| \le 1$ and so 
	\beqs
	\Gamma(A)\cap \Gamma(B) = \Gamma(A \cap B) \cup \{v\}, \label{plusv}
	\eeqs	
as $v \in \Gamma(A) \cap \Gamma(B)$ but $v \notin \Gamma(A \cap B)$. In particular, $|\Gamma(A)\cap \Gamma(B)| - |\Gamma(A \cap B)| = 1$. Putting this into \eqref{cycle1} gives
	\beqs
		h(C) = f(uv) + g(uv) - h(D) -1. \label{cycle2}
	\eeqs

Now suppose that $D \neq \emptyset$ and choose some vertex $w \in D$. Since $G[A \cup \Gamma(A)]$ and $G[B \cup \Gamma(B)]$ are both connected, there exists a $v-w$ path, $v=a_1 \cdots a_r=w$ in $G[A \cup \Gamma(A)]$ and a $v-w$ path $v=b_1\cdots b_t = w$ in $G[B \cup \Gamma(B)]$. Note that $A \in F_{uv}$ and so $A \cap \Gamma(v) = \{u\}$, forcing $a_2 = u$. This means that $a_2 \neq b_2$ and so the two $v-w$ paths are distinct. Let $i>1$ be minimal such that $a_i = b_j$ for some $j >1$ and fix $j$ minimal with $b_j = a_i$. Note then that $a_1,a_2,\ldots,a_i,b_2,b_3 \ldots b_{j-1}$ are distinct vertices and so $a_1 a_2 \cdots a_i b_{j-1}b_{j-2}\cdots b_2$ is a cycle in $G[C \cup \Gamma(C)]$.

Note that either $a_i \in A \cap B = D$ or $a_i \in \Gamma(A) \cap \Gamma(B)$. If the latter is the case, then $a_i \in \Gamma(D)$ since $a_i \neq v$, and, by \eqref{plusv}, $\Gamma(A) \cap \Gamma(B) = \Gamma(D) \cup \{v\}$. In either case, $a_i \in D \cup \Gamma(D)$. Then since $a_2 = u \notin D \cup \Gamma(D)$, there must be some $s < i$ such that $a_s \notin D \cup \Gamma(D)$, but $a_{s+1} \in D \cup \Gamma(D)$. If $a_{s+1} \in D$, then $a_s \in \Gamma(D)$, which gives a contradiction. So $a_s \in U \setminus D$ and $a_{s+1} \in \Gamma(D)$. This means that $D \in G_{a_s a_{s+1}}$ and so $h(D) \ge g(a_s a_{s+1})$. Finally, since $a_s a_{s+1}$ is an edge in a cycle and we have chosen $uv$ to minimise $g(uv)$, it must be that $g(a_s a_{s+1}) \ge g(uv)$. If we put this inequality into \eqref{cycle2} we get
	\beqs
		h(C) &\le& f(uv) + g(uv) - g(uv) -1 \nonumber \\
		&=& f(uv) - 1.
	\eeqs
Since $f(uv) < 1$, we have that $h(C) < 0$, a contradiction. It must therefore be the case that $D = \emptyset$.

Now suppose $uv$ is in some cycle $F = u_1v_1 \ldots u_m v_m$ with $(u_1,v_m) = (u,v)$ and let $Q = \{u_1,\ldots,u_m\}$. Note that $u_m \notin A$ and $u_1 \notin B$. So if $Q \subset C = A \cup B$, then there exists some $j \neq m$ with $u_j \in A$, $u_{j+1} \in B$. It is then the case $v_j \in \Gamma(A) \cap \Gamma(B) = \{v\}$, which gives a contradiction. So there exists some $j$ such that $u_{j+1} \notin C$ but $u_j \in C$. This means that $C \in G_{u_{j+1}v_j}$ and so $h(C) \ge g(u_{j+1}v_j) \ge g(uv)$. Since $D = \emptyset$ and $\Gamma(A) \cap \Gamma(B) = \{v\}$, \eqref{cycle2} gives
	\beqs
		h(C) &=& f(uv) + g(uv) - 0 - 1 \nonumber \\
		&<& g(uv). \nonumber
	\eeqs
But this again gives a contradiction since then $g(u_{j+1}v_j) < g(uv)$. It follows that $G$ cannot contain a cycle and so must be a tree.
\end{proof}

\section{Proof of Theorem \ref{main}}\label{secmain}
We now come to proving Theorem \ref{main}. We will prove this by considering an edge-minimal counterexample and arriving at a contradiction. The first half of the proof will show that this counterexample must be a tree and thus acyclic. The second half will show that the counterexample, at the same time, must contain a cycle.

\begin{proof}[Proof of Theorem \ref{main}]
Let $k \ge 1$ and $d > h \ge 2$ be positive integers, fix $r = d-h$ and let
	\beqs
		\alpha = h-1 + \frac{r+1}{k+1 + (r-1)\lceil k/h \rceil}. \nonumber
	\eeqs
Since $\alpha$ is fixed, we will write $h(S),f(uv)$ and $g(uv)$ in place of $h(S,\alpha), f(uv,\alpha)$ and $g(uv,\alpha)$ respectively.

Suppose that there exists a bipartite graph with maximum left degree at most $h+r$ which satisfies the $\alpha$-neighbourhood condition but doesn't have an $(h,hk)$-matching. Let $G =(U,V,E)$ be edge-minimal with these properties and assume without loss of generality that there are no isolated vertices in $G$. Note that $G$ has minimum left degree at least $h$ since $h(\{u\}) \ge 0$ for each $u \in U$. Suppose that there exists some non-empty $A \subsetneq U$ such that $h(A) \le h(U)$ and suppose that $A$ is such a set with minimal $h(A)$. It is clear that the subgraph $H = G[A\cup \Gamma(A)]$ satisfies the $\alpha$-neighbourhood condition and has fewer edges than $G$ and so by assumption must have an $(h,hk)$-matching. If $B \subset U \setminus A$, then
	\beqs
		|\Gamma_{G-(A \cup \Gamma(A))}(B)| -\alpha|B| &\ge& (|\Gamma_G(B\cup A)|-\alpha|B\cup A|) - (|\Gamma_G(A)|-\alpha|A|) \nonumber \\
		&=& h(A\cup B) - h(A) \nonumber \\
		&\ge& 0. \nonumber
	\eeqs
Thus $G-(A\cup \Gamma(A))$ satisfies the $\alpha$-neighbourhood condition and so by assumption must contain an $(h,hk)$-matching. The two $(h,hk)$-matchings are vertex-disjoint and so their union is an $(h,hk)$-matching in $G$. This gives a contradiction and so we must have $h(A) > h(U)$ for each non-empty $A \subsetneq U$.

If there exists an $\alpha$-redundant edge $uv$ in $G$ (equivalently $f(uv)\ge 1$), then we may simply delete it to find a counter-example with fewer edges which contradicts the minimality of $G$. So $f(uv) \in [0,1)$ for each edge $uv$ in $E$.  If we pick some edge $uv$, we have that $f(uv) < 1$ and so there must be some $A \in F_{uv}$ with $h(A) < 1$. Then since $h(U) \le h(A)$ for non-empty $A \subset U$, it must be the case that $h(U) < 1$. We have now shown that $G$ satisfies all the conditions for Lemma \ref{tree} and so $G$ must be a tree.

For each positive integer $i$, let $U_i = \{u \in U : d(u) = i\}$ and $V_i = \{v \in V: d(v) = i\}$. Then let $F = \{u \in U_h : |\Gamma(u) \cap V_1| = h-1\}$ and $Z = U_h \setminus F$. Suppose that $C \cup \Gamma(C)$ is a component of $G[F \cup \Gamma(F)]$. For each $u \in C$, let $X_u = \Gamma(u) \cap V_1$ and $Y_u = \Gamma(u) \setminus V_1$. Note that by pruning the leaves of $G[C \cup \Gamma(C)]$ contained in $V_1$, we get $G[C \cup \bigcup_{u \in C} Y_u]$. We can then see that $G[C \cup \bigcup_{u \in C} Y_u]$ is a tree and so $e(G[C \cup \bigcup_{u \in C} Y_u]) = |C \cup \bigcup_{u \in C} Y_u| - 1 = |C| + |\bigcup_{u \in C} Y_u| - 1$. On the other hand $|Y_u| = 1$ for each $u \in C$ and so $e(G[C \cup \bigcup_{u \in C} Y_u]) = |C|$. Comparing these two expressions we see that $|\bigcup_{u \in C} Y_u| = 1$. 

$G$ is connected and so $\Gamma(C)$ and $\Gamma(U \setminus C)$ must have a non-empty intersection. Note however that $\Gamma(U \setminus C) \cap \Gamma(C) \subset \bigcup_{u \in C} Y_u$ since all vertices in $\bigcup_{u \in C}X_u$ are leaves. Therefore each component of $G[F \cup \Gamma(F)]$ has exactly one vertex in $\Gamma(U \setminus F)$. In this case, we will say that $F$ satisfies the \em critical link property\em.

The following algorithm adds vertices from $Z$ to $F$ as long as it is possible to do so under the constraint that $F$ must always satisfy the critical link property.

\begin{algorithm}
\SetKw{KwFn}{Initialization}
	\KwFn{Set $\eta = \emptyset$} \;
	\While{$Z \neq \emptyset$}{
		Pick $u \in Z$\;
  		\eIf{$|\Gamma(u) \cap \Gamma(U\setminus(F \cup \{u\}))| = 1$}{
   			set $Z = \eta \cup (Z \setminus \{u\})$, $F = F \cup \{u\}$ and $\eta = \emptyset$ \;
   		}{
		Set $Z = Z \setminus \{u\}$ and $\eta = \eta \cup \{u\}$ \;
  		}
 	}
\end{algorithm}

We claim that after each iteration of the loop, $F$ still satisfies the critical link property. This is true initially and can only be changed in the loop, if we add a vertex to $F$. Suppose $u$ is added to $F$ at a certain stage and let $C \cup \Gamma(C)$ be the component of $(F \cup \{u\}) \cup \Gamma(F \cup \{u\})$ containing $u$. Note that all other components of $F$ will remain unchanged and so we only have to consider $C \cup \Gamma(C)$. Let $B = C \setminus \{u\}$ and note that $B \cup \Gamma(B)$ is the collection of components which are joined together by the addition of $u$ to $F$. Let $R = \Gamma(B) \cap \Gamma(U \setminus B)$ be the set of vertices in $V$ which connect the components of $B \cup \Gamma(B)$ to the rest of $G$ and note that by assumption $R$ must be a subset of the neighbourhood of $u$. Note that since we have added $u$ to $F$, it must be the case that $|\Gamma(u) \cap \Gamma(U\setminus(F \cup \{u\}))| = 1$. Further note that $\Gamma(C) \cap \Gamma(U\setminus(F \cup \{u\}))$ is a subset of $\Gamma(u)$ by the above argument and so $|\Gamma(C) \cap \Gamma(U \setminus C)| = 1$ as required.

So let us suppose we have augmented $F$ as far as we can by running the algorithm described above (so we have a subset $F \subset U_h$ such that $G[F \cup \Gamma(F)]$ is a forest which satisfies the critical link property and further that we cannot maintain this property if we add any vertex from $U_h$). Note that since each component of $G[F \cup \Gamma(F)]$ has exactly one vertex in the neighbourhood of $U \setminus F$, we know that $U \setminus F$ cannot be the empty set. So let $W = \{v \in V : |\Gamma(v) \setminus F| \ge 2\}$ be the subset of $V$ with at least two neighbours in $U \setminus F$. We claim that each $u \in U \setminus F$ has at least two neighbours in $W$ which will in turn mean that $G[(U \setminus F)\cup W]$ is a subgraph of $G$ with minimum degree at least $2$. We will then have arrived at a contradiction since this subgraph of the tree $G$ must then contain a cycle.

So pick $u \in U \setminus F$ and first suppose that $d(u) = h$. Since we have not added $u$ to $F$ whilst running the algorithm, either $\Gamma(u) \cap \Gamma(U \setminus (F \setminus \{u\})) = \emptyset$ or  $|\Gamma(u) \cap \Gamma(U \setminus (F \cup \{u\})| \ge 2$. In the latter case, note that $\Gamma(u) \cap \Gamma(U \setminus (F \cup \{u\}) \subset W$ and so $|\Gamma(u) \cap W| \ge 2$. In the former case, let $F^{+} = F \cup \{u\}$ and consider the component, $\cC = Q \cup \Gamma(Q)$, of $G[F^{+} \cup \Gamma(F^{+})]$ with $u \in Q \subset F^{+}$. Note that $\Gamma(Q) \cap \Gamma(U \setminus F^{+}) = \Gamma(u) \cap \Gamma(F^{+})$ since $u$ must be a neighbour of each vertex in $\Gamma(Q \setminus \{u\})\cap \Gamma(U \setminus (Q \setminus \{u\})$ and so $Q \cup \Gamma(Q)$ must be disconnected from the rest of the graph. Since $G$ is connected, it must then be the case that $Q=U$. Recall that $G$ is a bipartite tree. Counting edges two ways, we see that $h|U| = |U|+|V|-1$ and so $|V| = (h-1+\frac{1}{|U|})|U|$. On the other hand, recall that $G$ satisfies the $\alpha$-neighbourhood condition and so $|V| \ge \alpha |U|$. This in turn forces $(h-1+\frac{1}{|U|}) \ge \alpha$. We can then bound the size of $U$:
	\beqs
		|U| &\le& \frac{k+1+(r-1)\lceil k/h \rceil}{r+1} \nonumber \\
		&<& \frac{k+1}{2} + \left\lceil \frac{k}{h} \right\rceil \nonumber \\
		&\le& \frac{k+1}{2}+ \frac{k+1}{2}. \nonumber
	\eeqs
It must therfore be the case that $|U| \le k$. Now we have a contradiction since $G$ is already an $(h,hk)$-matching. Therefore any vertex $u \in U \setminus F$ with $d(u) = h$ has at least two neighbours in $W$.

Now suppose we have picked some $u \in U \setminus F$ with $d(u) \ge h+1$ but $|\Gamma(u) \cap W| < 2$. First consider what happens with $|\Gamma(u) \cap W| = 0$. Let, $\cC = Q \cup \Gamma(Q)$, be the component of $G[F \cup \{u\} \cup \Gamma(F \cup \{u\})]$ with $u \in Q \subset F$. As argued before, it must be the case that $\Gamma(Q\setminus \{u\}) \cap \Gamma(U \setminus (Q \setminus \{u\}))$ is a subset of $\Gamma(u)$. But note that $\Gamma(u) \cap \Gamma(U  \setminus Q) = \Gamma(u) \cap W = \emptyset$ and so $\cC$ must be the vertex set of a component in $G$. Since $G$ is connected, it must then be the case that $U = Q$. As in the case when $d(u) = h$, we can now count edges two ways to realise $h(|U|-1) + d(u) = |U|+|V|-1$ and so $|V| = (h-1 + \frac{d(u)+1-h}{|U|})|U|$. Again, we recall that $G$ satisfies the $\alpha$-neighbourhood condition and so $h-1 + \frac{d(u)+1-h}{|U|} \ge \alpha$. We can bound the size of $U$:
	\beqs
		|U| &\le& \frac{(d(u)+1-h)(k+1+(r-1) \lceil k/h \rceil)}{r+1}. \label{comp1}
	\eeqs

On the other hand, order the vertices of $\Gamma(u) = \{v_1,\ldots v_{d(u)}\}$ such that if $i < j$, then in $G - u$ the size of the component containing $v_i$ is at most the size of the component containing $v_j$ (alternatively, consider $G$ as a tree with root $u$ and order the branches by increasing size). If the $h$ shortest branches collectively contain at most $k-1$ vertices in $U$, then we can construct a matching, simply by cutting the edges $uv_{h+1},uv_{h+2},\ldots uv_{d(h)}$. Therefore the union of the smallest $h$ branches contain at least $k$ vertices from $U$. It must also be the case that all other branches contain at least $\lceil \frac{k}{h} \rceil$ left vertices. We now bound the size of $U$ by counting $u$, the vertices in the $h$ smallest branches, and the vertices in other branches:
	\beqs
		|U| \ge 1 + k + (d(u)-h)\left\lceil k/h \right\rceil. \label{comp21}
	\eeqs
	
After some algebra, we can reformulate \eqref{comp21} to get
	\beqs
		|U| &\ge& \frac{(d(u)+1-h)(k+1+(r-1) \lceil k/h \rceil)}{r+1} \nonumber \\
		&+& \frac{(r+h-d(u))(k+1 - 2 \lceil k/h \rceil) + (r+1)\lceil k/h \rceil}{r+1}. \label{comp2}
	\eeqs
Since $k+1 - 2 \lceil \frac{k}{h} \rceil \ge 0$, we see that the second term in \eqref{comp2} is positive and so our lower bound for $U$ here is strictly larger than the upper bound we have at \eqref{comp1}. So we have a contradiction and so it cannot be the case that $|\Gamma(u) \cap W| = 0$.

All that remains is to consider the case that $u \in U \setminus F$ is such that $d(u) \ge h+1$ and $|\Gamma(u) \cap W| = 1$. Suppose that $\Gamma(u) \cap W = \{w\}$ and let $Y = F \cup \{u\}$, $Z = \Gamma(Y) \setminus \{w\}$. Further let $A \cup B$ be the component of $G[Y \cup Z]$ such that $u \in A \subset Y$. Recall that $d(w) \ge 2$ by assumption and so there must exist some $u' \in U \setminus Y$. It is then the case that $A \neq U$, so that $h(A) > 0$ and $|B| > \alpha|A| -1$. Note that $G \setminus (A \cup B)$ will still satisfy the $\alpha$-neighbourhood condition and so by assumption, must contain an $(h,hk)$-matching. Let $H = G[A \cup B]$. Then if $H$ contains an $(h,hk)$-matching, it is independent of any $(h,hk)$-matching in $G \setminus (A \cup B)$ and their union is an $(h,hk)$-matching in $G$. Therefore $H$ does not contain an $(h,hk)$-matching. As in the previous case, we will now bound $|A|$ above and below to reach a contradiction. Firstly, since $H$ is a tree and we know the degrees of all the vertices in $A$, we can count the number of edges two ways to get that $h(|A|-1) + d(u)-1 = |A| + |B| -1$ and so $|B| = (h-1 + \frac{d(u)-h}{|A|})|A|$. Using the fact that $|B| > \alpha |A| - 1$, we get an upper bound for $|A|$:
	\beqs
		|A| < \frac{(d(u)-h+1)(k+1+(r-1) \lceil k/h \rceil)}{r+1}. \label{comp3}
	\eeqs
	
On the other hand, order the vertices of $\Gamma(u)\setminus \{w\} = \{v_1,\ldots v_{d(u)}\}$ such that if $i < j$, then in $H - u$ the size of the component containing $v_i$ is at most the size of the component containing $v_j$. Since we cannot have a matching, the $h$ smallest branches must collectively have at least $k$ vertices of $A$ in them and the $(d(u)-1-h)$ other branches must each contain at least $\lceil \frac{k}{h} \rceil$ vertices of $A$. Therefore we get a lower bound for $|A|$:
	\beqs
		|A| &\ge& 1 + k + (d(u)-1-h)\lceil k/h \rceil \nonumber \\
		&=& \frac{(r+1)(1 + k + (d(u)-1-h)\lceil k/h \rceil)}{r+1} \nonumber \\
		&=& \frac{(r+1)(k+1+(r-1) \lceil k/h \rceil) - (r+h-d(u))(r+1)\lceil k/h \rceil}{r+1} \nonumber \\
		&=& \frac{(d(u)-h+1)(k+1+(r-1) \lceil k/h \rceil)}{r+1} \nonumber \\
		&+& \frac{(r+h-d(u))(k+1 - 2\lceil k/h \rceil)}{r+1}. \label{comp4}
	\eeqs
Again, since $k+1 - 2\lceil \frac{k}{h} \rceil \ge 0$, we see that our lower bound for $|A|$ at \eqref{comp4} is at least the strict upper bound given at \eqref{comp3}. This is a contradiction and so it must be the case that $|\Gamma(u) \cap W| \ge 2$.

We have now shown that $G[(U \setminus F) \cup W]$ is a graph with at least one vertex and minimum degree at least $2$. Therefore $G$ must contain a cycle, contradicting that $G$ is a tree and so acyclic. So we can finally conclude that there can be no such counterexample and so the result holds.
\end{proof}

\section{Optimality}\label{tight}
In this section we give examples to show that the bounds given in Theorem \ref{main} are tight. Much of the material in this section builds on the work given in the paper of Bonacina, Galesi, Huynh and Wollan \cite{CNF} (this is very clear for the case $k=h$). For ease of notation, for a bipartite graph $G = (U,V,E)$ and a set $S \subset U$, we let $R_G(S) = \frac{|\Gamma(S)|}{|S|}$.

Rather than drawing the bipartite graph $G = (U,V,E)$, we will give pictorial representations of the hypergraph $H = (V,F)$ where $F = \{\Gamma(u) : u \in U\}$. So $H$ is the hypergraph on the right vertices $V$ of $G$, where each edge is the neighbourhood of a left vertex of $G$. Throughout the section, an ellipse represents the neighbourhood of a vertex in $U$ (i.e. a hyperedge of $H$), a small circle represents a single vertex in $V$, and a rectangle with a number $x$ inside represents a collection of $x$ vertices in $V$. For all the figures that follow, we will assume that parameters $a,b,h,q$ and $r$ are given. We give a toy example below where Figure \ref{exam2} is the hypergraph representation of Figure \ref{exam1}:

\begin{minipage}{0.45\textwidth}
\centering
\begin{tikzpicture}[thick,scale=0.6, every node/.style={scale=0.6}]
	\node[vertex] at (0,0) (x1) {} ;
	\node[vertex, position = -90:{3} from x1] (x2) {} ;
	\node[position = 0:{5} from x1] (ydum) {} ;
	\node[vertex, position = 90:{0} from ydum] (y2) {} ;
	\node[vertex, position = 90:{1} from ydum] (y1) {} ;
	\node[vertex, position = -90:{0.5} from ydum] (y3) {} ;
	\node[vertex, position = -90:{1.5} from ydum] (y4) {} ;
	\node[vertex, position = -90:{2.5} from ydum] (y5) {} ;
	\node[vertex, position = -90:{3.5} from ydum] (y6) {} ;
	\node[vertex, position = -90:{4.5} from ydum] (y7) {} ;
	\draw (x1) --(y1) ;
	\draw (x1) --(y2) ;
	\draw (x1) --(y3) ;
	\draw (x1) --(y4) ;
	\draw (x2) --(y4) ;
	\draw (x2) --(y5) ;
	\draw (x2) --(y6) ;
	\draw (x2) --(y7) ;
\end{tikzpicture}
\captionsetup{font=footnotesize}
\captionof{figure}{}\label{exam1}
\end{minipage}
\begin{minipage}{0.45\textwidth}
\centering
\begin{tikzpicture}[thick,scale=0.6, every node/.style={scale=0.6}]
	\node[vertex] at (0,0) (x1) {} ;
	\node[vsquare, position = -90:{1.2} from x1] (y1) {$2$} ;
	\node[position = 0:{1} from y1] (z1) {} ;
	\node[vertex, position = -90:{2.9} from x1] (x2) {} ;
	\node[vsquare, position = -90:{1.2} from x2] (y2) {$2$} ;
	\node[vertex, position = -90:{2.9} from x2] (x3) {} ;
	\node[ellipse, draw = black, fit= (x2) (y1) (x1)] {};
	\node[ellipse, draw = black, fit= (x2) (y2) (x3)] {};
\end{tikzpicture}
\captionsetup{font=footnotesize}
\captionof{figure}{}\label{exam2}
\end{minipage}

\medskip

To make the graph representations more digestible, we will use a hexagon so that Figures \ref{f1} and \ref{f2} represent the same graphs, which we will call $I_{q}$. Thus $I_q$ is a chain of $q$ hyperedges $A_1,\ldots, A_q$ each containing $h$ vertices such that $A_i$ and $A_{i+1}$ overlap in one vertex for each $i \le q-1$ and the $A_i$ are disjoint otherwise. Another way of thinking of $I_q$ is to start with a path consisting of $q$ left vertices and $q+1$ right vertices, adding another $h-2$ distinct leaf-neighbours to each left vertex in the path and then taking the hyperedge representation.
 
\begin{minipage}{0.45\textwidth}
\centering
\begin{tikzpicture}[thick,scale=0.6, every node/.style={scale=0.6}]
	\node[vertex, label = $v$] at (0,0) (x) {} ;
	\node[hex, position = -90:{1.7} from x] (y) {$q$} ;
	\node[vertex,position = -90:{4} from x, label = $w$] (z) {} ;
	\node[ellipse, draw = black, fit= (x) (y) (z)] {};
\end{tikzpicture}
\captionsetup{font=footnotesize}
\captionof{figure}{}\label{f1}
\end{minipage}
\begin{minipage}{0.45\textwidth}
\centering
\begin{tikzpicture}[thick,scale=0.6, every node/.style={scale=0.6}]
	\node[vertex, label=$v$] at (0,0) (x1) {} ;
	\node[vsquare, position = -90:{1.2} from x1] (y1) {$h-2$} ;
	\node[position = 0:{1} from y1] (z1) {} ;
	\node[vertex, position = -90:{3.5} from x1] (x2) {} ;
	\node[vsquare, position = -90:{1.8} from x2] (y2) {$h-2$} ;
	\node[vertex, position = -90:{3.5} from x2] (x3) {} ;
	\node[position = -90:{1.2} from x3] (y3) {} ;
	\node[position = -90:{1} from x3] (xm3) {} ;
	\node[position = -90:{5.3} from x3] (xmr) {} ;
	\node[vertex, position = -90:{6.5} from x3] (xr) {} ;
	\node[vsquare, position = -90:{1.2} from xr] (ym) {$h-2$} ;
	\node[position = 0:{1} from ym] (zm) {} ;
	\node[vertex, position = -90:{2.9} from xr, label = $w$] (xm) {} ;
	\node[ellipse, draw = black, fit= (x2) (y1) (x1)] {};
	\node[ellipse, draw = black, fit= (x2) (y2) (x3)] {};
	\node[ellipse, draw = black, fit= (xr) (ym) (xm)] {};
	\draw[dashed] (xm3) -- (xmr) ;
	\draw[dashed] (z1) -- node[right=2pt] {$q$} (zm) ;
\end{tikzpicture}
\captionsetup{font=footnotesize}
\captionof{figure}{}\label{f2}
\end{minipage}

\medskip

Given our representation of the graph $I_q$, we will use a star so that Figures \ref{fstar1} and \ref{fstar2} represent the same graph and a triangle so that Figures \ref{ftri1} and \ref{ftri2} represent the same graphs.
\begin{minipage}{0.45\textwidth}
\centering
\begin{tikzpicture}[thick,scale=0.6, every node/.style={scale=0.6}]
	\node[vertex, label = $v$] at (0,0) (x) {} ;
	\node[star, draw = black, position = -90:{1.6} from x] (y) {} ;
	\node[vertex,position = -90:{4} from x, label = $w$] (z) {} ;
	\node[ellipse, draw = black, fit= (x) (y) (z)] {};
\end{tikzpicture}
\captionsetup{font=footnotesize}
\captionof{figure}{}\label{fstar1}
\end{minipage}
\begin{minipage}{0.45\textwidth}
\centering
\begin{tikzpicture}[thick,scale=0.6, every node/.style={scale=0.6}]
	\node[vertex, label = $v$] at (0,0) (x1) {} ;
	\node[vertex, position = 0:{4} from x1, label = $w$] (x2) {} ;
	\node[position = 0:{2} from x1] (d1) {} ;
	\node[vsquare, position = 90:{1.3} from d1] (h) {$h-2$} {} ;
	\node[vertex, position = 90:{4} from d1] (a1) {} ;
	\node[hex, position = 90:{6} from d1] (a2) {$a$} ;
	\node[vertex, position = 90:{8} from d1] (a3) {} ;
	\node[ellipse, draw = black, fit = (x1) (x2) (a1)] {} ;
	\node[ellipse, draw = black, fit = (a1) (a2) (a3)] {} ;
\end{tikzpicture}
\captionsetup{font=footnotesize}
\captionof{figure}{}\label{fstar2}
\end{minipage}

\begin{minipage}{0.45\textwidth}
\centering
\begin{tikzpicture}[thick,scale=0.6, every node/.style={scale=0.6}]
	\node[vertex, label = $v$] at (0,0) (x) {} ;
	\node[regular polygon, regular polygon sides= 3, draw = black, position = -90:{1.6} from x] (y) {} ;
	\node[vertex,position = -90:{4} from x, label = $w$] (z) {} ;
	\node[ellipse, draw = black, fit= (x) (y) (z)] {};
\end{tikzpicture}
\captionsetup{font=footnotesize}
\captionof{figure}{}\label{ftri1}
\end{minipage}
\begin{minipage}{0.45\textwidth}
\centering
\begin{tikzpicture}[thick,scale=0.6, every node/.style={scale=0.6}]
	\node[vertex, label = $v$] at (0,0) (w1) {} ;
	\node[vertex, label = $w$, position = 90:{4} from w1] (y1) {} ;
	\node[vertex, position = 180:{4} from w1] (r1) {} ;
	\node[vertex, position = 180:{6} from r1] (r2) {} ;
	\node[vertex, position = 180:{4} from y1] (s1) {} ;
	\node[vertex, position = 180:{6} from s1] (s2) {} ;
	\node[hex, position = -90:{2} from r2] (kr3) {$a+1$} ;
	\node[vertex, position = -90:{2}	from kr3] (kr4) {} ;
	\node[ellipse, draw = black, fit = (r2) (kr3) (kr4)] (er2) {} ;
	\node[hex, position = 90:{2} from s2] (ks3) {$a$} ;
	\node[vertex, position = 90:{2}	from ks3] (ks4) {} ;
	\node[ellipse, draw = black, fit = (s2) (ks3) (ks4)] (es2) {} ;
	\node[hex, position = -90:{2} from r1] (kr1) {$a+1$} ;
	\node[vertex, position = -90:{2}	from kr1] (kr2) {} ;
	\node[ellipse, draw = black, fit = (r1) (kr1) (kr2)] (er1) {} ;
	\node[hex, position = 90:{2} from s1] (ks1) {$a$} ;
	\node[vertex, position = 90:{2}	from ks1] (ks2) {} ;
	\node[ellipse, draw = black, fit = (s1) (ks1) (ks2)] (es1) {} ;
	\draw[dashed] (es1) -- node[above=2pt] {$h-b$} (es2) ;
	\draw[dashed] (er1) -- node[above=2pt] {$b+r-2$} (er2) ;
	\node[rounded rectangle, draw = black,fit = (w1) (y1) (s1) (s2) (r2) (r1)] {} ;
\end{tikzpicture}
\captionsetup{font=footnotesize}
\captionof{figure}{}\label{ftri2}
\end{minipage}

\medskip

Given these new pieces of notation, we are now in a position to prove Proposition \ref{counter}

\begin{proof}[Proof of Proposition \ref{counter}]
Let $k \ge 2$ and $d > h \ge 2$, fix $r = d-h$ and suppose that $a \in \mathbb{N}$ and $b \in [h]$ are such that $k=ah+b$. We will have to construct a sequence of bipartite graphs $G_n$ each satisfying the $\alpha_n$-neighbourhood condition with no $(h,hk)$-matching where $\alpha_n$ tends to $\alpha = h-1 + \frac{r+1}{k+1+(r-1)\lceil k/h \rceil} = h-1 + \frac{r+1}{k+1 + (r-1)(a+1)}$. We will do this starting with a small graph $H$ which does not contain a $(h,hk)$-matching and then replacing a copy of $I_{a+1}$ connected to the rest of $H$ through $v_1$ with a large graph $J_n$ in which in every $(h,hk)$-matching, $v_1$ is in a component with at least $h(a+1)$ edges. We give the base graph $H = (U,V,E)$ below.

\begin{minipage}{\textwidth}
\centering
\begin{tikzpicture}[thick,scale=0.6, every node/.style={scale=0.6}]
	\node[label = $v_1$, switch] at (0,0) (x1) {} ;
	\node[vertex, position = -90:{11} from x1] (xm) {} ;
	\node[vertex, position = -90:{7} from x1] (xs) {} ;
	\node[vertex, position = -90:{4} from x1] (xr) {} ;
	\node[hex, position = 0:{1.8} from x1] (y1) {$a+1$} ;
	\node[position = -90:{0.2} from y1] (asdf) {} ;
	\node[vertex, position = 0:{4} from x1] (z1) {} ;
	\node[hex, position = 0:{1.8} from xr] (yr) {$a+1$} ;
	\node[vertex, position = 0:{4} from xr] (zr) {} ;
	\node[hex, position = 0:{1.3} from xm] (ym) {$a$} ;
	\node[vertex, position = 0:{4} from xm] (zm) {} ;
	\node[hex, position = 0:{1.3} from xs] (ys) {$a$} ;
	\node[vertex, position = 0:{4} from xs] (zs) {} ;
	\node[ellipse,draw = black,fit = (x1) (y1) (z1),label=right:$L_1$] (e1) {} ;
	\node[ellipse,draw = black,fit = (xr) (yr) (zr),label=right:$L_{b+r}$] (er) {} ;
	\node[ellipse,draw = black,fit = (xs) (ys) (zs),label=right:$M_1$] (es) {} ;
	\node[ellipse,draw = black,fit = (xm) (ym) (zm),label=right:$M_{h-b}$] (em) {} ;
	\node[ellipse,draw = black,fit = (x1) (xr) (xs) (xm)] {} ;
	\draw[dashed] (e1) -- node[right=2pt] {$b+r$} (er) ;
	\draw[dashed] (es) -- node[right=2pt] {$h-b$} (em) ;
\end{tikzpicture}
\captionsetup{font=footnotesize}

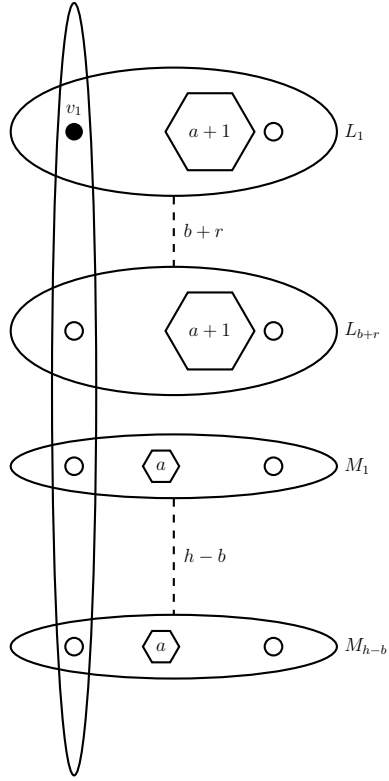
\captionof{figure}{Base Graph $H$}\label{base}
\end{minipage}

\medskip

The augmenting gadget $J_n$ can be thought of an odd cycle where the edges are replaced with copies of the graphs given in figures \ref{fstar1} and \ref{ftri1} alternately with two "star" edges next to each other.

\begin{minipage}{\textwidth}
\centering
\begin{tikzpicture}[thick,scale=0.5, every node/.style={scale=0.5}]
	\node[label = $v_1$, switch] at (0,0) (x1) {} ;
	\node[vertex, position = 90:{4} from x1] (x2) {} ;
	\node[star, draw = black, position = 90:{1.75} from x1] (s2) {} ;
	\node[vertex, position = 90:{4} from x2] (x3) {} ;
	\node[regular polygon, regular polygon sides= 3, draw = black,position = 90:{1.75} from x2] (t3) {} ;
	\node[vertex, position = 180:{6} from x3] (x4) {} ;
	\node[star, draw = black, position = 180:{2.75} from x3] (s4) {} ;
	\node[vertex, position = 270:{4} from x4] (x5) {} ;
	\node[vertex, position = 270:{4} from x5] (x6) {} ;
	\node[regular polygon, regular polygon sides= 3, draw = black,position = 270:{1.75} from x5] (t5) {} ;
	\node[star, draw = black, position = 0:{2.75} from x6] (s6) {} ;
	\draw[dashed] (x4) -- (x5) ;
	\node[ellipse,draw = black, fit = (x1) (s2) (x2),label =right:$S_1$] {} ;
	\node[ellipse,draw = black, fit = (x2) (t3) (x3),label =right:$T_1$] {} ;
	\node[ellipse,draw = black, fit = (x3) (s4) (x4),label = $S_2$] {} ;
	\node[ellipse,draw = black, fit = (x6) (s6) (x1),label =$S_{n+1}$] {} ;
	\node[ellipse,draw = black, fit = (x5) (t5) (x6),label = right:$T_n$] {} ;
\end{tikzpicture}
\captionsetup{font=footnotesize}

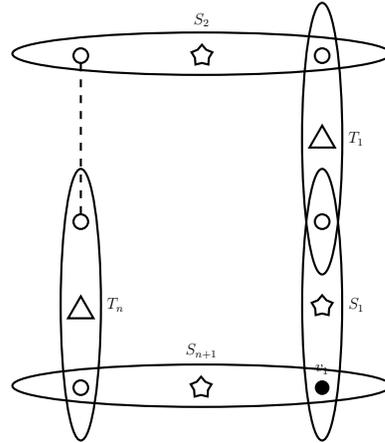
\captionof{figure}{Augmenting Gadget $J_n$}\label{fn}
\end{minipage}

\medskip

To form $G_n$, we first remove $L_1 \setminus \{v_1\}$ from $H$ to give $H'$. We will then identify the vertices labelled $v_1$ in $H'$ and the augmenting gadget $J_n$ to form $G_n = (U_n,V_n,E_n)$. To prove our proposition, it suffices to show that $G_n$ does not contain a $(h,hk)$-matching and that it satisfies the $\alpha_n$-neighbourhood condition where $\alpha_n$ increases to $\alpha$ as $n$ tends to infinity.

So suppose that $G_n$ contains a $(h,hk)$-matching. This induces a $(h,hk)$-matching on $J_n$. One can verify that in any $(h,hk)$-matching on $J_n$ that $v_1$ is in a component with at least $(a+1)h$ edges. Then the matching on $G_n$ must induce a $(h,hk)$-matching on $H'$ where $v$ is a component with at most $(k-(a+1))h$ edges. This is a contradiction though, since we could extend this to a $(h,hk)$-matching on $H$ by adding back $L_1$. So $G_n$ cannot contain a $(h,hk)$-matching.

It is also easy to verify that $R_{G_n}(S)$ is minimised over $S \subseteq U_n$ when $U_n$. $G_n$ then satisfies the $R_{G_n}(U_n)$-neighbourhood condition where
	\beqs
		R_{G_n}(U_n) &=& \frac{|V_n|}{|U_n|} \nonumber \\
		&=& \frac{|V| - (a+1)(h-1) + |B_n|-1}{|U|-(a+1)+|A_n|}. \nonumber
	\eeqs
	
After some simplification this becomes
	\beqs
		R_{G_n}(U_n) &=& \frac{(h-1)(a(h+r)+b+r+1) + r+1+ n(r+1) + n(a(h+r-1)+b+r)}{1+a(h+r)+b+r) + n(a(h+r-1)+b+r)} \nonumber \\
		&=& h-1 + \frac{n(r+1) + r+1}{1+a(h+r) + b+r + n(a(h+r-1)+b+r)} \nonumber \\
		&=& h-1 + \frac{(n+1)(r+1)}{(n+1)(ah+b + ar - a +r)+1} \nonumber \\
		&=& h-1 + \frac{(n+1)(r+1)}{(n+1)(k+1 + (a+1)(r-1))+1}. \nonumber
	\eeqs
We now see that $R_{G_n}(U_n)$ tends to $\alpha$ as $n$ tends to infinity and so we are done.

\end{proof}

\section{$k$-star covering}\label{starsect}
A naturally related problem is the following: Under what conditions can you cover a graph with trees of bounded size?. It is clear that only stars will be necessary since, for any tree with diameter at least three contains an edge between two non-leaf vertices which we may delete.

\begin{defn}
Let $k \ge 1$ be a positive integer and $G = (U,V,E)$ be a bipartite graph. A \em $k$-star covering \em is a subset $F$ of $E$ such that in $H = (U,V,F)$, each component is a star with at most $k$ edges in it, and $d_H(x) \ge 1$, for each $x \in U \cup V$.
\end{defn}

The following result gives a necessary and sufficient condition for a bipartite graph to have a $k$-star covering.

\begin{thm}\label{stars}
Let $G = (U,V,E)$ be a bipartite graph. Then $G$ has a $k$-star covering iff $|\Gamma(S)| \ge \frac{1}{k}|S|$ for all $S \subset U$ and $S \subset V$.
\end{thm}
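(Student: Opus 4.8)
The plan is to prove the two directions separately; the forward (necessity) direction is short, while the backward (sufficiency) direction carries the weight.

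For necessity, suppose $H=(U,V,F)$ is a $k$-star covering of $G$ and fix $S\subseteq U$; the case $S\subseteq V$ is identical, since the defining conditions for a $k$-star covering are symmetric in $U$ and $V$. Since $F\subseteq E$ we have $|\Gamma(S)|\ge|N_H(S)|$, and I would estimate $|N_H(S)|$ component by component. If $C$ is a star component of $H$ then $N_H(S)\cap C=N_H(S\cap C)$, and I claim $|N_H(S\cap C)|\ge|S\cap C|/k$: if the centre of $C$ lies in $V$ then $|C\cap U|\le k$, so $S\cap C$ has at most $k$ vertices while $N_H(S\cap C)$ is just the single centre; and if the centre lies in $U$ then $S\cap C$ is contained in that one vertex, while $N_H(S\cap C)=C\cap V$ has size at least $1$. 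Summing over the components meeting $S$ gives $|\Gamma(S)|\ge|N_H(S)|\ge|S|/k$.

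For sufficiency, the first step is to extract two matching-like subgraphs from the hypothesis. Blowing each vertex of $V$ up into $k$ copies with the same neighbourhood in $U$ turns the condition $k|\Gamma(S)|\ge|S|$ for $S\subseteq U$ into Hall's condition, so there is $F_1\subseteq E$ with $d_{F_1}(u)=1$ for all $u\in U$ and $d_{F_1}(v)\le k$ for all $v\in V$; symmetrically (blowing up $U$) there is $F_2\subseteq E$ with $d_{F_2}(v)=1$ for all $v\in V$ and $d_{F_2}(u)\le k$ for all $u\in U$. In $(U,V,F_1)$ every component is a star centred at a vertex of $V$ with at most $k$ leaves, together with some isolated vertices of $V$; likewise for $F_2$ with the roles of $U$ and $V$ swapped. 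Neither $F_1$ nor $F_2$ alone is a $k$-star covering (each fails to cover one side), so the plan is to stitch them together.

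The heart of the argument — and the step I expect to be the main obstacle — is producing a genuine $k$-star covering out of $F_1$ and $F_2$. The union $F_1\cup F_2$ covers every vertex, but its components need not be stars: a vertex may simultaneously be a leaf (in $F_1$ or $F_2$) and the centre of a star, creating ``double stars'' or longer paths. I would resolve this by an uncrossing/patching procedure: repeatedly locate a non-star component, delete an edge joining two would-be centres, and, if this uncovers an endpoint, re-attach that endpoint as a leaf on the other side; the delicate point is to maintain throughout that no star acquires more than $k$ edges, which is exactly where one must exploit that $F_1$ and $F_2$ are both $k$-bounded and live on complementary sides. If direct patching proves unwieldy, the alternative I would try is to reformulate a $k$-star covering as a choice of centre sets $C_U\subseteq U$, $C_V\subseteq V$ together with assignments of $V\setminus C_V$ to adjacent vertices of $C_U$ and of $U\setminus C_U$ to adjacent vertices of $C_V$, each centre receiving between $1$ and $k$ leaves; this reduces the problem to finding a partition for which two Hoffman-type (simultaneously lower- and upper-bounded) bipartite assignment conditions hold, which one can hope to secure by choosing $(C_U,C_V)$ extremally and using the global neighbourhood hypothesis to repair any violating set. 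Either way, the bookkeeping that keeps every star of size at most $k$ is the crux.
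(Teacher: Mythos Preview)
Your necessity argument is fine. For sufficiency, however, what you have written is a plan rather than a proof: you correctly extract $F_1$ and $F_2$ via blown-up Hall, but the combination step---which you yourself call ``the crux''---is never carried out. Neither the uncrossing procedure nor the Hoffman-style partition search is made precise, and it is not obvious that a naive patching terminates or keeps every star within $k$ edges. One concrete way to close the gap: regard $f_1:U\to V$ (from $F_1$) and $f_2:V\to U$ (from $F_2$) as functions, start with $F_1$, and cascade---for each uncovered $v\in V$ attach it to $u=f_2(v)$, sever the edge $u\,f_1(u)$, and promote $u$ to a centre; the sets of newly-uncovered $V$-vertices at successive stages are pairwise disjoint, so the process halts, and the fibre bounds $|f_1^{-1}(\cdot)|,|f_2^{-1}(\cdot)|\le k$ cap every star that appears. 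Without an explicit terminating procedure of this kind, the sufficiency direction is incomplete.

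The paper's proof of sufficiency is entirely different and worth contrasting. It takes an edge-minimal counterexample $G$ and notes that no edge is $\tfrac1k$-redundant; if some edge $uv$ has $d(u),d(v)\ge 2$, there is a set $S\ni u$ with $h(S,\tfrac1k)<1$, and since $h$ takes values in $\tfrac1k\bZ$ one gets $h(S\setminus u,\tfrac1k)=0$. The induced subgraph on $(S\setminus u)\cup\Gamma(S\setminus u)$ then carries a $k$-star covering (exactly your blown-up Hall argument, applied to a tight piece), and its complement still satisfies the double-sided $\tfrac1k$-condition, contradicting minimality. Hence every edge of $G$ meets a leaf, so the connected graph $G$ is itself a single star---already a $k$-star covering. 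Your constructive route, once completed, has the virtue of exhibiting the covering directly; the paper's tight-set decomposition sidesteps the combination problem altogether.
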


Note that we require all vertices to be covered in a star covering and so the above is not equivalent to the existence of a $(1,k)$-matching (which may only cover the left vertices).

\begin{proof}[Proof of Theorem \ref{stars}]
For a bipartite graph $G = (U,V,E)$, we shall say that $G$ satisfies the \em double-sided $\alpha$-neighbourhood condition \em if $|\Gamma(S)| \ge \alpha |S|$ for any $S \subset U \cup V$. Theorem \ref{stars} can be reformulated as $G = (U,V,E)$ contains a $k$-star covering if and only if it satisfies the double-sided $\frac{1}{k}$-neighbourhood condition.

The necessity of the double-sided neighbourhood condition can be seen by counting edges. Suppose that $G = (U,V,E)$ is a bipartite graph with a $k$-star covering $F \subset E$. If we let $H = (U,V,F)$, and $S \subset U$, then $|S| \le e_H(S,\Gamma(S)) \le e_H(U,\Gamma(S)) \le k|\Gamma(S)|$, since each vertex $v$ in $V$ has degree $d_H(v) \le k$ and similarly if $S \subset V$, $|S| \le k|\Gamma(S)|$. Therefore, $G$ satisfies the doubled-sided $\frac{1}{k}$-neighbourhood condition.

It remains to show sufficiency. Let $G=(U,V,E)$ be a bipartite graph such that $|\Gamma(S)| \ge \frac{1}{k}|S|$ for each $S \subset U$ and $S \subset V$ and suppose that $G$ is minimal with respect to $|E|$ such that it does not have a $k$-star covering. $G$ is minimal with respect to edges so it must be connected and for all $uv \in E$, where $u \in U$ and $v \in V$, it must be the case that there is either a set $S \subset U$ such that $u \in S$, $v \notin \Gamma(S \setminus u)$ and $h(U,\frac{1}{k}) < 1$, or a set $T \subset V$ such that $v \in T$, $u \notin \Gamma(T \setminus \{v\})$ and $h(U,\frac{1}{k})<1$. Now suppose that there exists an edge $uv \in E$ such that $d(u),d(v) \ge 2$ and assume without loss of generality that $S \subset U$ is such that $u \in S$, $v \notin \Gamma(S \setminus u)$ and $h(S,\frac{1}{k}) <1$. Note that since $h(S,\frac{1}{k})$ must be a multiple of $\frac{1}{k}$, $h(S,\frac{1}{k}) \le \frac{k-1}{k}$. Further note that $h(\{u\},\frac{1}{k}) = d(u) - \frac{1}{k} \ge 1$ and so $S \setminus u \neq \emptyset$, and
	\beqs
		h(S \setminus u, \frac{1}{k}) &=&  |\Gamma(S \setminus u)| - \frac{1}{k}|S \setminus u| \nonumber \\
		&=& |\Gamma(S)| - |\Gamma(u) \setminus \Gamma(S)| - \frac{1}{k}(|S|-1) \nonumber \\
		&=& h(S,\frac{1}{k}) + \frac{1}{k} - |\Gamma(u) \setminus \Gamma(S)|. \label{covercount1}
	\eeqs
Note that $h(S,\frac{1}{k}) \le \frac{k-1}{k}$ and $v \in \Gamma(u) \setminus \Gamma(S)$, so $|\Gamma(u) \setminus \Gamma(S)| \ge 1$. Putting these into \eqref{covercount1} gives us that $h(S \setminus u,\frac{1}{k}) \le 0$ and so since $h$ is positive, $h(S \setminus u,\frac{1}{k}) = 0$. Consider $H = G[(S \setminus u) \cup \Gamma(S \setminus u)]$. $|\Gamma_H(A)| \ge \frac{1}{k}|A|$ for all $A \subset (S \setminus u)$ and so there must be a $(1,k)$-matching on $H$ (consider blowing up the right vertices and applying Hall's Theorem). However, since $|\Gamma(S \setminus u)| = \frac{1}{k}|S \setminus u|$, each vertex in $\Gamma(S \setminus u)$ must be used in this covering and so  this $(1,k)$-matching must in fact be a $k$-star covering. It must then be the case that $J = G[(U \setminus (S \setminus u)) \cup (V \setminus \Gamma(S \setminus u))]$ cannot have a $k$-star covering, else we may take the union of the vertex disjoint $k$-star coverings of $H$ and $J$ to get a $k$-star covering of $G$. On the other hand, if $A \subset (U \setminus (S \setminus u))$, then 
	\beqs
		|\Gamma_J(A)| &\ge& |\Gamma_G(A \cup (S \setminus u))| - |\Gamma_G(S \setminus u)| \nonumber \\
		&=& |\Gamma_G(A \cup (S \setminus u))| - \frac{1}{k}|S \setminus u| \nonumber \\
		&\ge& \frac{1}{k}|A \cup (S \setminus u)| - \frac{1}{k}|S \setminus u| \nonumber \\
		&\ge& \frac{1}{k}|A|. \nonumber
	\eeqs
If $B \subset V \setminus(\Gamma(S \setminus u))$, then $\Gamma_J(B) = \Gamma_G(B)$ and so $|\Gamma_J(B)| \ge \frac{1}{k}|B|$. Therefore $J$ satisfies the double-sided $\frac{1}{k}$-neighbourhood condition but does not have a $k$-star covering. This contradicts the edge-minimality of $G$ and so there can be no such edge $uv$ with $d(u),d(v) \ge 2$.

So it must be the case that each edge in $G$ must be incident to a leaf. The only such connected graphs are stars and so $G$ must be a star. $G$ would then already be a $k$-star covering and so we arrive at a contradiction.
\end{proof}

It would be interesting to find a "Tutte-style" result for the existence of a $k$-star covering in a general (non-bipartite) graph.


\begin{thebibliography}{[AHU]}

\bibitem{CNF}
I. Bonacina, N. Galesi, T. Huynh and P. Wollan. Space proof complexity for random $3$-CNFs via a $(2 - \epsilon)$-Hall's Theorem. \em arXiv:1411.1619 \em

\bibitem{Hall1}
P. Hall. On Representatives of Subsets. \em J. London Math. Soc. 10 (1): 26-30, 1935 \em


\end{thebibliography}
\end{document}